\theoremstyle{plain}
\newtheorem{theorem}{Theorem}[section]
\newtheorem{proposition}[theorem]{Proposition}
\newtheorem{cor}[theorem]{Corollary}
\newtheorem{def-thm}[theorem]{Definition-Theorem}
\newtheorem{lemma}[theorem]{Lemma}
\newtheorem{re}[theorem]{Remark}
\newtheorem{defi}[theorem]{Definition}
\newtheorem{ex}[theorem]{Example}
\newtheorem*{thai}{Theorem I}
\newtheorem*{thaii}{Theorem II}
\newtheorem*{thaiii}{Theorem III}
\newtheorem*{cori}{Corollary I}
\newtheorem*{corii}{Corollary II}
\theoremstyle{definition}
\newtheorem{remark}[theorem]{Remark}
\def\min{\mathop{\mathrm{min}}}
\def\PP{\mathbb P}
\begin{document}
\title[Nevanlinna theory via holomorphic forms]{Nevanlinna theory via holomorphic forms}
\author[X.J. Dong \& S.S. Yang] {Xian-jing Dong \& Shuang-shuang Yang}

\address{School of Mathematical Sciences \\ Qufu Normal University \\ Qufu, 273165, P. R. China}
\email{xjdong05@126.com}

\address{Department of Mathematics \\ Nanjing University \\ Nanjing, 210093, P. R. China}
\email{ssyang1997@126.com}


\subjclass[2010]{30D35.} \keywords{Nevanlinna theory; value distribution; Second Main Theorem; defect relation; Riemann surface}
\date{}
\maketitle \thispagestyle{empty} \setcounter{page}{1}

\begin{abstract}  This paper re-develops   Nevanlinna theory for meromorphic functions on $\mathbb C$ in the viewpoint of holomorphic forms.  
According to our observation,  Nevanlinna's functions can be formulated by a holomorphic form. Applying this thought to  Riemann 
surfaces,  one  then extends the definition of  Nevanlinna's functions using a holomorphic form $\mathscr S$.  With the new settings,  an analogue of Nevanlinna theory for the \emph{$\mathscr S$-exhausted Riemann surfaces} is obtained,  which  is viewed as  a generalization of the classical Nevanlinna theory for $\mathbb C$ and $\mathbb D.$

\end{abstract}

\vskip\baselineskip

\setlength\arraycolsep{2pt}

\section{Introduction}

 \subsection{Motivation}~
 
  The paper  is motivated  by the early work due to  the first named author \cite{Dong} (see also Atsuji \cite{atsuji}), who considered  Nevanlinna theory for  complete K\"ahler manifolds with non-positive sectional curvature.  To make it simpler,  instead of  K\"ahler manifolds, we restrict ourselves to the Riemann surfaces.  In what follows, we introduce it without going into the details.

Let $S$ be a non-compact Riemann surface equipped with a complete Hermitian metric $h$ of non-positive Gauss curvature $K.$ Fix a  reference point $o\in S.$ Set 
$$  \kappa(r)=\min\big\{K(x): x\in \overline{D(r)}\big\},$$
where $D(r)$ is a geodesic ball of radius $r$ centered at $o.$  Let $f$ be a nonconstant  meromorphic function on $S$. 
One can well define  the Nevanlinna's functions  $T_f(r), m_f(r,a)$ and  $N_f(r,a)$ (see Section 3.1 in \cite{Dong}).  The first named author showed that (see Theorem 1.2 in \cite{Dong}) for any $\delta>0$
$$(q-2)T_f(r)+T(r, \mathscr R)\leq \sum_{j=1}^q\overline{N}_f(r,a_j)+O\Big(\log T_f(r)+\log C(o,r,\delta)\Big)$$
 holds for all $r\in(0,\infty)$ outside a set $E_\delta$ of finite Lebesgue measure,  
where $C(o,r,\delta)$ is a positive function with the estimate (see (19) in \cite{Dong})
$$\log C(r,o,\delta)\leq O\big(r\sqrt{-\kappa(r)}+\delta\log r\big),$$
and   the curvature term $T(r, \mathscr R)$ is   bounded by
$$r^2\kappa(r)\leq T(r,\mathscr R)\leq0.$$
As showed as above,    $\log C(o,r,\delta)$ and $T(r, \mathscr R)$  are estimated, however, these two estimates are  rough and hard to improve by using the previous methods in \cite{Dong}. 
For example,  we consider $S=\mathbb D$, where $\mathbb D$ is the unit disc. According to  the conditions for  metrics, one needs to equip $\mathbb D$ with  Poincar\'e metric (of curvature $-1$).  In this situation, we have $\kappa(r)\equiv-1.$ This gives that 
$$-r^2\leq T(r,\mathscr R)\leq0.$$
However,    the best  lower bound  of $T(r,\mathscr R)$  is  $O(-r).$  

The main drawback  of the method in \cite{Dong} 
 is that the selectivity of  metrics is restricted, i.e., the metrics have to  be complete and non-positively curved, but this will  cause a rough estimate. 

\subsection{A viewpoint of  holomorphic forms}~

Let  $f$ be a nonconstant meromorphic function on  $\mathbb C,$ namely, the complex plane with standard Euclidean metric. Nevanlinna's  \emph{characteristic function} $T(r,f)$ of $f$ is well known \cite{Nev} as follows
$$T(r,f):=m(r,f)+N(r,f),$$
where 
$$m(r,f)=\int_{|z|=r}\log^+|f|\frac{d\theta}{2\pi}$$
and $$N(r, f)=\int_{0}^r\frac{n(t,f)-n(0,f)}{t}dt+n(0,f)\log r$$
are called the \emph{proximity function} and \emph{counting function} of $f$ respectively, in which, $n(r, f)$ denotes the number of poles of $f$ on the disc $D(r):=\{|z|< r\}.$
Characteristic function  is an important notion, it  characterizes the   growth of  meromorphic functions, using which Nevanlinna \cite{Nev}  in 1925 established two fundamental theorems, i.e.,  \emph{First Main Theorem} and \emph{Second Main Theorem},    named  Nevanlinna theory.  
Nevanlinna theory plays a central  role in complex analysis and   hyperbolic geometry. 
Roughly speaking, this theory 
 studies the size  of  images of meromorphic functions  or mappings in   target spaces, it  
is a great generalization of  the Little Picard Theorem saying that a  meromorphic function must be a constant if it  omits   three distinct values in $\overline{\mathbb C}.$ There are plenty of excellent results in Nevanlinna theory, for example,  the reader may refer to \cite{gri, gri1, Lang, ru1, Shiff, stoll, wong, wu} and see also \cite{atsuji, chuang, Dong, he-ru, Hu, Sibony-Paun, Sakai, shiffman, Ya}.

In this paper,  we shall investigate  Nevanlinna theory from the viewpoint of  holomorphic forms $\mathscr S.$   With this idea,  one generalizes  the Nevanlinna theory  to one class of  non-compact Riemann surfaces  which we call the \emph{$\mathscr S$-exhausted  Riemann surfaces}.
To begin with, let us   describe  how   Nevanlinna's  functions  can be formulated by  a holomorphic form $\mathscr S.$ 
To see it more clearly, we shall  use  the Ahlfords' characteristic function 
\begin{equation}\label{ttt}
T_f(r)=\int_1^r\frac{dt}{t}\int_{D(t)}f^*\omega_{FS},
\end{equation}
which is equivalent to $T(r,f)$ up to a  bounded term, here $\omega_{FS}$ is the Fubini-Study form on $\mathbb P^1(\mathbb C).$ We need  the following proximity function 
 \begin{equation}\label{ttt1}
m_f(r, a)=\int_{|z|=r}\log\frac{1}{\|f,a\|}\frac{d\theta}{2\pi},
\end{equation}
where $\|\cdot , \cdot\|$ is the spherical distance on $\mathbb P^1(\mathbb C).$
 Taking the holomorphic form
$\mathscr S=dz,$ we will see that $\mathbb C$ is   $\mathscr S$-exhausted (see Definition \ref{ddi}). 
   Define
 $$\hat z=\int_0^z\mathscr S,$$
 which is a holomorphic function in $z$ with  a  unique zero $z=0.$ 
The  $\mathscr S$-disc and $\mathscr S$-circle of radius $r$ centered at 0 are  defined respectively by 
 $$D^{\mathscr S}(r)=\big\{z: |\hat z|<r\big\}, \ \ \  C^{\mathscr S}(r)=\big\{z: |\hat z|=r\big\}.$$
\ \ \ \  Let $g_r(0, z)$ be the Green function of $\Delta/2$ for $D^{\mathscr S}(r)$ with a pole at $0$ and Dirichlet boundary condition and  let $d\pi_0^r$ be the harmonic measure on $C^{\mathscr S}(r)$ with respect to 0. 
Notice that $g_r(0,z)=(1/\pi)\log(r/|\hat z|),$ $d\pi_0^r=d\theta/2\pi,$  
then by integration by part,   (\ref{ttt})  can be rewritten as
 \begin{eqnarray*}
T_f(r)&=&\int_1^r\frac{dt}{t}\int_{D^{\mathscr S}(t)}f^*\omega_{FS} \\
&=&\pi\int_{D^{\mathscr S}(r)} g_r(0,z)f^*\omega_{FS}-\pi\int_{D^{\mathscr S}(1)} g_1(0,z)f^*\omega_{FS},
 \end{eqnarray*}
and (\ref{ttt1}) can be replaced by 
$$m_f(r, a)=\int_{C^{\mathscr S}(r)}\log\frac{1}{\|f,a\|}d\pi_0^r.
$$
Similarly,  the counting function  is that 
$$N_f(r,a)=\int_1^r\frac{n^{\mathscr S}_f(t,a)}{t}dt,$$
where $n^{\mathscr S}_f(r,a)$ denotes the number of  zeros of $f-a$ on $D^{\mathscr S}(r)$.

Follow the  idea as shown as above,  we consider a non-compact Hermitian Riemann surface $(\mathcal S, h).$ Choosing $\mathscr S,$  a nowhere-vanishing holomorphic form 
  on $\mathcal S$ such that $\hat x=\int_o^x\mathscr S$ defines a holomorphic function, where $o$ is a fixed point in $\mathcal S.$ Following \cite{GR}, such  a form always exists. 
If $\mathcal S$ is \emph{$\mathscr S$-exhausted},  i.e., any sequence of $\mathscr S$-discs exhausts $\mathcal S$ when radius approaches increasingly to $R^{\mathscr S},$ where $R^{\mathscr S}$ is  the $\mathscr S$-radius of $\mathcal S$ with respect to $o,$ see Definition \ref{ppp}. 
To a  $\mathscr S$-exhausted surface $\mathcal S,$   $T_f^{\mathscr S}(r), m_{f}^{\mathscr S}(r, a)$ and $N_f^{\mathscr S}(r, a)$ of  a meromorphic function $f$  on $\mathcal S$ can be similarly defined and they make sense, see definition  for notations in Section 3.1.1. By computing Green functions and harmonic measures, we shall establish an analogue of Nevanlinna theory, which turns out  to be an extension  of  the classical Nevanlinna theory for $\mathbb C$ and $\mathbb D$ (unit disc), see, e.g., \cite{Noguchi, Nev, ru, ru-sibony}.
\subsection{Main results}~

In what follows, we state the main results of the paper.

\begin{thai}[Theorem \ref{thm1}]  Let $(\mathcal S, h; \mathscr S)$ be a  $\mathscr S$-exhausted Hermitian Riemann surface of $\mathscr S$-radius $R^{\mathscr S}$ with respect to $o.$  Let  $\gamma$ be an integrable function on $(0, R^{\mathscr S})$ with $\int_0^{R^{\mathscr S}}\gamma(r) dr=\infty.$
Let $f$ be a nonconstant meromorphic function on $\mathcal S$ and   $a_1, \cdots, a_q$ be distinct values in $\overline{\mathbb C}.$ Then for any $\delta>0$
 \begin{eqnarray*}
&& (q-2)T^{\mathscr S}_f(r)+T^{\mathscr S}(r, \mathscr R) \\
&\leq& \sum_{j=1}^q \overline{N}^{\mathscr S}_f(r,a_j)+O\Big(\log T^{\mathscr S}_f(r)+\log\|\mathscr S\|_{r,\sup}+\log\gamma(r)+\delta\log r\Big)
 \end{eqnarray*}
 holds for all $r\in(0,R^{\mathscr S})$ outside a set $E_\delta$ with 
$\int_{E_{\delta}}\gamma(r)dr<\infty,$ where  
$$\|\mathscr S\|_{r, \sup}=\sup\big\{\|\mathscr S_x\|_h: |\hat x|<r\big\}.$$
\end{thai}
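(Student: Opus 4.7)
The plan is to follow the Ahlfors curvature approach to the Second Main Theorem, adapted to the $\mathscr{S}$-exhausted setting so that all integrals can be evaluated against the Green function $g_r(o,x)$ and harmonic measure $d\pi_0^r$ associated with $D^{\mathscr S}(r)$, rather than against the standard radial kernel on a disc in $\mathbb C$. Concretely, I would introduce a singular volume form
$$\Psi \;=\; \frac{\omega_{FS}}{\prod_{j=1}^q \|w,a_j\|^{2}\bigl(\log\|w,a_j\|\bigr)^{2}}$$
on $\mathbb P^1(\mathbb C)$, which is smooth off $\{a_1,\dots,a_q\}$ and whose Ricci form equals $(q-2)\omega_{FS}$ modulo a smooth correction, while $\log\Psi$ has prescribed logarithmic singularities at the $a_j$. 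Pulling back by $f$ and using the associated hermitian data of $(\mathcal S,h;\mathscr S)$ reduces everything to an integral inequality on $D^{\mathscr S}(r)$.

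The key steps would be: (i) write the First Main Theorem in $\mathscr S$-form, i.e.\ express $m_f^{\mathscr S}(r,a)+N_f^{\mathscr S}(r,a)=T_f^{\mathscr S}(r)+O(1)$, by applying Jensen's formula for the Green function $g_r(o,x)=(1/\pi)\log(r/|\hat x|)$ of $\Delta/2$ on $D^{\mathscr S}(r)$; (ii) apply the Poincar\'e--Lelong identity to $f^*\log\Psi$ and integrate against $g_r(o,x)$ to obtain
$$(q-2)T_f^{\mathscr S}(r)+T^{\mathscr S}(r,\mathscr R) \;=\; \sum_{j=1}^q \overline N_f^{\mathscr S}(r,a_j)+\pi\!\int_{C^{\mathscr S}(r)}\!\log\bigl(f^*\Psi\bigr)\,d\pi_0^r+O(1),$$
up to the usual ramification contribution absorbed by $\overline N$; (iii) estimate the boundary integral using the concavity of $\log$, so that the remainder is controlled by $\log^+\!\!\int_{C^{\mathscr S}(r)} f^*\Psi\,d\pi_0^r$; (iv) bound this latter integral by the derivative of $T_f^{\mathscr S}(r)$ together with $\|\mathscr S\|_{r,\sup}^{2}$, which is the distortion between $x$ and $\hat x$ when one pushes forward the boundary integral through the form $\mathscr S$.

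The concluding step is the calculus/Borel lemma, applied twice, that upgrades $\log^+ dT_f^{\mathscr S}/dr$ to $\log T_f^{\mathscr S}(r)$ at the cost of an exceptional set $E_\delta$. Here one needs the weighted version: for any positive nondecreasing function $\Phi$, the estimate $d\Phi/dr \leq \Phi(r)^{1+\delta}/\gamma(r)$ fails only on a set $E_\delta$ with $\int_{E_\delta}\gamma(r)dr<\infty$, using the hypothesis $\int_0^{R^{\mathscr S}}\gamma=\infty$. Iterating this argument and combining with the estimate from step (iv) produces the error term $O(\log T_f^{\mathscr S}(r)+\log\|\mathscr S\|_{r,\sup}+\log\gamma(r)+\delta\log r)$ as required.

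I expect the main obstacle to be step (iv): converting the boundary integral $\int_{C^{\mathscr S}(r)}f^*\Psi\,d\pi_0^r$ into a derivative of $T_f^{\mathscr S}(r)$ requires co-area-type manipulations that depend on the shape of $\mathscr S$-circles; since these are level sets of $|\hat x|$ rather than of the hermitian distance from $o$, one must carefully compare the push-forward of the area measure $f^*\omega_{FS}$ under the map $x\mapsto|\hat x|$ with the Lebesgue measure on $(0,R^{\mathscr S})$. This is precisely where the factor $\|\mathscr S\|_{r,\sup}$ enters, as it measures the pointwise distortion between the metric $h$ and the flat metric $|\mathscr S|^2$ pulled back from the $\hat x$-coordinate. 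Once this comparison is established, the remainder of the proof is a weighted adaptation of the classical Nevanlinna--Ahlfors argument.
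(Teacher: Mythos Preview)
Your outline is essentially the paper's proof: introduce the Ahlfors--Griffiths singular volume form $\Phi$ on $\mathbb P^1(\mathbb C)$, take the Ricci current of $f^*\Phi=\xi\,dV$, integrate over $D^{\mathscr S}(r)$ via the Dynkin/Green--Jensen formula for $g_r(o,x)=\pi^{-1}\log(r/|\hat x|)$, and then control the boundary term $\int_{C^{\mathscr S}(r)}\log\xi\,d\pi_o^r$ by concavity followed by a Calculus Lemma built on two applications of Borel's growth lemma.

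Two corrections to your sketch. First, in step~(iv) the boundary integral $\int_{C^{\mathscr S}(r)}\xi\,d\pi_o^r$ is \emph{not} comparable to a derivative of $T_f^{\mathscr S}(r)$, because $\Psi\neq\omega_{FS}$; differentiating $T_f^{\mathscr S}$ only sees $f^*\omega_{FS}$, not the singular density. What the Calculus Lemma actually produces is a bound by a power of
\[
A_\xi(r)=\int_{r_0}^r\frac{dt}{t}\int_{D^{\mathscr S}(t)}f^*\Psi,
\]
and the missing link back to $T_f^{\mathscr S}$ is the integral--geometric estimate: by change of variables $\int_{D^{\mathscr S}(t)}f^*\Psi=\int_{\overline{\mathbb C}}n_f^{\mathscr S}(t,\zeta)\,\Psi(\zeta)$, then Fubini and the First Main Theorem give $A_\xi(r)=\int_{\overline{\mathbb C}}N_f^{\mathscr S}(r,\zeta)\,\Psi(\zeta)\le c_0\,T_f^{\mathscr S}(r)$, using that $\Psi$ is normalized to have total mass~$1$. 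Second, your weighted Borel inequality should read $\Phi'(r)\le \Phi(r)^{1+\delta}\gamma(r)$ (multiplied, not divided); with $1/\gamma$ the exceptional set would satisfy $\int_{E_\delta}\gamma^{-1}<\infty$ rather than $\int_{E_\delta}\gamma<\infty$.
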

In  Theorem I, the term $T^{\mathscr S}(r, \mathscr R)$ is called the \emph{characteristic} of the Ricci form $\mathscr R:=-dd^c\log h$ of $\mathcal S,$ which depends on the 
Gauss curvature of $\mathcal S,$ see (\ref{ric}).
 Notice that $\mathbb C$ is $dz$-exhausted  and $\|dz\|=1$ under  standard Euclidean metric. 
In this case, the Nevanlinna's functions agree with the classical ones.  Hence, Theorem I yields  (by letting 
 $\gamma=1$) a classical consequence for $\mathbb C$  that 
\begin{cori}  Let $f$ be a nonconstant meromorphic function on $\mathbb C,$  and let $a_1, \cdots, a_q$ be distinct points in $\overline{\mathbb C}.$  Then for any $\delta>0$
$$ (q-2)T_f(r) 
\leq \sum_{j=1}^q \overline{N}_f(r,a_j)+O\Big(\log T_f(r)+\delta\log r\Big)
$$ holds for all $r\in(0,\infty)$ outside a set $E_\delta$ of finite Lebesgue measure.  
\end{cori}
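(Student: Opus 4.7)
The plan is to derive Corollary I as a direct specialization of Theorem I to the base case $\mathcal{S} = \mathbb{C}$ with the standard Euclidean metric and the canonical holomorphic form $\mathscr{S} = dz$. The entire task is to verify that each term in the general inequality reduces to its classical counterpart and that the two estimate-dependent terms drop out cleanly.

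First I would set up the specialization: take $\mathscr{S} = dz$, so $\hat{z} = \int_0^z dz = z$, and hence $D^{\mathscr{S}}(r) = D(r)$ and $C^{\mathscr{S}}(r) = \{|z| = r\}$. Since any increasing sequence of Euclidean discs exhausts $\mathbb{C}$, the surface is $\mathscr{S}$-exhausted with $\mathscr{S}$-radius $R^{\mathscr{S}} = \infty$. Choose the weight $\gamma \equiv 1$; then $\int_0^{R^{\mathscr{S}}} \gamma(r)\,dr = \infty$ and the exceptional set condition $\int_{E_\delta} \gamma(r)\,dr < \infty$ becomes exactly the statement that $E_\delta$ has finite Lebesgue measure.

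Next I would check that the $\mathscr{S}$-version of the Nevanlinna functions coincides with the classical ones. The paper's computation using Green's function $g_r(0,z) = (1/\pi)\log(r/|z|)$ and harmonic measure $d\pi_0^r = d\theta/(2\pi)$ already exhibits this identification for $T_f^{\mathscr{S}}(r)$, $m_f^{\mathscr{S}}(r,a)$ and $N_f^{\mathscr{S}}(r,a)$, up to a bounded term coming from Ahlfors' form of the characteristic. This ensures the main inequality translates term-by-term to the classical objects.

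Then I would show that the two error terms in Theorem I that don't appear in Corollary I actually vanish for this setup. Under the standard Euclidean metric, $\|dz\|_h \equiv 1$, so $\|\mathscr{S}\|_{r,\sup} = 1$ and $\log\|\mathscr{S}\|_{r,\sup} = 0$. Similarly, since the Euclidean metric on $\mathbb{C}$ has zero Gauss curvature, the Ricci form $\mathscr{R} = -dd^c \log h$ vanishes and hence $T^{\mathscr{S}}(r, \mathscr{R}) = 0$. Finally, $\log \gamma(r) = 0$. Substituting these identifications into Theorem I yields exactly the asserted inequality. I don't anticipate a genuine obstacle here: this is a straightforward specialization, and the only mild bookkeeping point is confirming that the Ahlfors-type characteristic used in the $\mathscr{S}$-framework agrees with Nevanlinna's $T(r,f)$ up to a bounded term that is absorbed into the $O(\log T_f(r))$ remainder.
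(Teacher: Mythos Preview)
Your proposal is correct and matches the paper's own argument essentially verbatim: the paper derives Corollary I from Theorem I by taking $\mathscr S=dz$ on $\mathbb C$ with the Euclidean metric, noting that $\|dz\|=1$ so $\log\|\mathscr S\|_{r,\sup}=0$, setting $\gamma\equiv 1$ so the exceptional-set condition becomes finite Lebesgue measure, and observing that the $\mathscr S$-Nevanlinna functions coincide with the classical ones. Your explicit remark that $T^{\mathscr S}(r,\mathscr R)=0$ because the Euclidean metric is flat is exactly the point needed to drop that term, and the bounded discrepancy between the Ahlfors and Nevanlinna characteristics is indeed absorbed into the $O(\log T_f(r))$ term.
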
 
Equipping $\mathbb D=\{z\in\mathbb C: |z|<1\}$ with standard Euclidean metric, then $\mathbb D$ is a  $dz$-exhausted Riemann surface. Take $\gamma=(1-r)^{-1},$  it concludes another classical consequence 
for $\mathbb D$ that
\begin{corii} Let $f$ be a nonconstant meromorphic function on $\mathbb D,$ and let $a_1, \cdots, a_q$ be distinct points in $\overline{\mathbb C}.$  Then for any $\delta>0$
$$ (q-2)T_f(r) 
\leq \sum_{j=1}^q \overline{N}_f(r,a_j)+O\Big(\log T_f(r)+\log \frac{1}{1-r}\Big)
$$ holds for all $r\in(0,1)$ outside a set $E_\delta$ with $\int_{E_\delta}(1-r)^{-1}dr<\infty$. 
\end{corii}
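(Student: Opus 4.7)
The plan is to obtain Corollary II as a direct application of Theorem I, specializing to $\mathcal S = \mathbb D$, metric $h$ the standard Euclidean one, holomorphic form $\mathscr S = dz$, base point $o = 0$, and weight $\gamma(r) = 1/(1-r)$. Almost all the work is just verifying that the various quantities appearing on the right-hand side of Theorem I either vanish or reduce to the terms advertised in Corollary II.

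First I would check the geometric setup. With $\mathscr S = dz$ and $o = 0$ one has $\hat z = \int_0^z dz = z$, so the $\mathscr S$-disc $D^{\mathscr S}(r) = \{|z|<r\}$ is the ordinary disc and $R^{\mathscr S} = 1$. Exhaustion is immediate: $\bigcup_{r<1} D^{\mathscr S}(r) = \mathbb D$, so $\mathbb D$ is $dz$-exhausted in the sense of Definition~\ref{ppp}. It follows that the characteristic, proximity, and counting functions $T_f^{\mathscr S}(r)$, $m_f^{\mathscr S}(r,a)$, $N_f^{\mathscr S}(r,a)$ coincide with the classical Nevanlinna functions $T_f(r)$, $m_f(r,a)$, $N_f(r,a)$ on $\mathbb D$, via the Green function $g_r(0,z) = (1/\pi)\log(r/|z|)$ and harmonic measure $d\theta/2\pi$ already used in the discussion preceding Theorem I.

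Next I would simplify the correction terms. Since $h$ is Euclidean, $\|dz\|_h \equiv 1$, hence $\|\mathscr S\|_{r,\sup} = 1$ and $\log\|\mathscr S\|_{r,\sup} = 0$ identically. The Euclidean metric is flat, so its Ricci form $\mathscr R = -dd^c\log h$ vanishes, giving $T^{\mathscr S}(r,\mathscr R) = 0$. Thus Theorem I collapses to
\begin{equation*}
(q-2)T_f(r) \leq \sum_{j=1}^q \overline{N}_f(r,a_j) + O\bigl(\log T_f(r) + \log\gamma(r) + \delta\log r\bigr).
\end{equation*}

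Finally I would choose $\gamma(r) = 1/(1-r)$. This is integrable on $(0,1)$ with $\int_0^1 \gamma(r)\,dr = \infty$, matching the hypothesis of Theorem I. The exceptional set $E_\delta$ then satisfies $\int_{E_\delta}(1-r)^{-1}dr < \infty$, exactly as required in the statement of Corollary II. The error term $\log\gamma(r)$ becomes $\log(1/(1-r))$, and $\delta\log r$ is bounded (in fact non-positive) for $r \in (0,1)$, so it gets absorbed into $O(\log(1/(1-r)))$. This yields the desired inequality. There is no substantive obstacle here: the only conceptual point is the choice of $\gamma$, which is forced by the divergence requirement $\int_0^1 \gamma = \infty$ together with the wish to preserve integrability of $\gamma$ near $r=1$ on the complement of the exceptional set, so that a genuine almost-everywhere inequality results.
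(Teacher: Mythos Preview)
Your proposal is correct and follows essentially the same approach as the paper: specialize Theorem~I to $\mathbb D$ with the Euclidean metric and $\mathscr S=dz$ (so $T^{\mathscr S}(r,\mathscr R)=0$ and $\|\mathscr S\|_{r,\sup}=1$), then choose $\gamma(r)=(1-r)^{-1}$. You have simply filled in the verification details that the paper leaves implicit.
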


Associating a  holomorphic mapping $f: \mathcal S\rightarrow \mathcal R,$ where $(\mathcal R, \omega)$  is a compact Hermitian Riemann surface. 
Follow a theorem of Chern \cite{chern},  we can  similarly define  Nevanlinna's functions via $\mathscr S.$ 
We  prove a more generalized theorem of Theorem I as follows  
\begin{thaii}[Theorem \ref{thm2}] Let $(\mathcal S, h; \mathscr S)$ be a  $\mathscr S$-exhausted Hermitian Riemann surface of $\mathscr S$-radius $R^{\mathscr S}$ with respect to $o,$  and let  $\mathcal R$ be a compact  Riemann surface of genus $g.$
Fix a positive $(1,1)$-form  $\omega$  on $\mathcal R.$
Let  $\gamma$ be an integrable function on $(0, R^{\mathscr S})$ with $\int_0^{R^{\mathscr S}}\gamma(r) dr=\infty.$
Let $f$ be a nonconstant holomorphic mapping from  $\mathcal S$ into $\mathcal R$  and  $a_1, \cdots, a_q$ be distinct points in $\mathcal R.$ Then for any $\delta>0$
 \begin{eqnarray*}
&& (q-2+2g)T^{\mathscr S}_{f,\omega}(r)+T^{\mathscr S}(r, \mathscr R) \\
&\leq& \sum_{j=1}^q \overline{N}^{\mathscr S}_f(r,a_j)+O\Big(\log T^{\mathscr S}_{f,\omega}(r)+\log\|\mathscr S\|_{r,\sup}+\log\gamma(r)+\delta\log r\Big)
 \end{eqnarray*}
 holds for all $r\in(0,R^{\mathscr S})$ outside a set $E_\delta$ with 
$\int_{E_{\delta}}\gamma(r)dr<\infty,$ where  
$$\|\mathscr S\|_{r, \sup}=\sup\big\{\|\mathscr S_x\|_h: |\hat x|<r\big\}.$$
\end{thaii}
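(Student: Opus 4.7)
The plan is to extend the argument for Theorem \ref{thm1} to a compact target $\mathcal R$ of genus $g$ by replacing the Fubini--Study form on $\mathbb P^1(\mathbb C)$ with a suitable singular volume form on $\mathcal R$. The extra summand $2g\cdot T^{\mathscr S}_{f,\omega}(r)$ in the inequality will arise from the degree $2g-2$ of the canonical bundle $K_{\mathcal R}$ through a Chern-class identity.

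The first step is to construct that volume form. For each $j$, fix a holomorphic section $s_j$ of $\mathcal O_{\mathcal R}(a_j)$ with divisor $a_j$ and a smooth Hermitian metric $\|\cdot\|_j$ on it; write $\omega_j$ for the corresponding Chern curvature form. Because $\mathcal R$ is compact and $\omega$ is positive, the cohomological identity
$$\bigl[\mathrm{Ric}(\omega)\bigr]-\sum_{j=1}^{q}[\omega_j]=-(q-2+2g)\,[\omega]\quad\text{in}\quad H^{1,1}(\mathcal R,\mathbb R)$$
combined with the $\partial\bar\partial$-lemma on the compact surface $\mathcal R$ produces a smooth function $\rho$ such that the singular volume form
$$\Phi=\frac{e^{\rho}\,\omega}{\prod_{j=1}^{q}\|s_j\|_j^{\,2}}$$
satisfies, in the sense of currents,
$$dd^c\log\Phi=(q-2+2g)\,\omega-\sum_{j=1}^{q}[a_j].$$

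Next I would pull $\Phi$ back to $\mathcal S$ and apply the $\mathscr S$-version of Jensen's formula developed in the proof of Theorem \ref{thm1}. Writing $f^{*}\Phi=F\cdot\omega_h$, where $\omega_h$ is the Hermitian area form of $(\mathcal S,h)$, one has $dd^c\log F=dd^c\log f^{*}\Phi+\mathscr R$, so that combining Poincar\'e--Lelong (to interpret $f^{*}[a_j]$ as a counting current) with the decomposition $\log F=\log\bigl(e^{\rho}\omega/h\bigr)-\sum_{j}\log\|s_j\circ f\|_j^{\,2}$, and integrating against the Dirichlet Green function $g_r(o,\cdot)$ and harmonic measure $d\pi_o^{r}$, produces an identity of the shape
\begin{align*}
&(q-2+2g)\,T^{\mathscr S}_{f,\omega}(r)+T^{\mathscr S}(r,\mathscr R)\\
&\qquad=\sum_{j=1}^{q}\bigl(m^{\mathscr S}_{f}(r,a_j)+N^{\mathscr S}_{f}(r,a_j)\bigr)+\tfrac12\int_{C^{\mathscr S}(r)}\log F\,d\pi_o^{r}+O(1).
\end{align*}
The First Main Theorem applied to each $a_j$ then absorbs the proximity functions into $T^{\mathscr S}_{f,\omega}(r)$, reducing the task to bounding the residual boundary integral.

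The principal obstacle, exactly as in Theorem \ref{thm1}, is the estimate of this boundary integral, which serves as a substitute for the logarithmic derivative lemma. By Jensen's concavity inequality,
$$\int_{C^{\mathscr S}(r)}\log F\,d\pi_o^{r}\ \leq\ \log\int_{C^{\mathscr S}(r)}F\,d\pi_o^{r},$$
and a direct computation in $\mathscr S$-coordinates shows that the right-hand mean is comparable to $\|\mathscr S\|_{r,\sup}\cdot\tfrac{d}{dr}T^{\mathscr S}_{f,\omega}(r)$. Two successive applications of the Borel-type calculus lemma with weight $\gamma$ then convert derivatives of $T^{\mathscr S}_{f,\omega}$ into $O\bigl(\log T^{\mathscr S}_{f,\omega}(r)+\log\gamma(r)+\delta\log r\bigr)$ outside a set $E_\delta$ with $\int_{E_{\delta}}\gamma(r)\,dr<\infty$, producing the error term stated in Theorem II. The passage from $N^{\mathscr S}_{f}(r,a_j)$ to the truncated $\overline{N}^{\mathscr S}_{f}(r,a_j)$ is then a standard ramification argument, the difference being absorbed into $T^{\mathscr S}(r,\mathscr R)$ through a second pass of the same calculus lemma. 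The main technical care required throughout is to track the factor $\|\mathscr S\|_{r,\sup}$, which arises from the mismatch between the flat $\mathscr S$-coordinates and the Hermitian metric $h$ on $\mathcal S$.
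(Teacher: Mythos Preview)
Your proposal has a genuine gap at the step where you estimate the boundary integral. The singular volume form you construct,
\[
\Phi=\frac{e^{\rho}\,\omega}{\prod_{j=1}^{q}\|s_j\|_j^{\,2}},
\]
is \emph{not} locally integrable on $\mathcal R$: near each $a_j$ the density behaves like $|\zeta|^{-2}$, which diverges. Consequently $f^{*}\Phi$ is not locally integrable on $\mathcal S$ near $f^{-1}(a_j)$, and the area integral $\int_{D^{\mathscr S}(r)}f^{*}\Phi$ is already $+\infty$ once $D^{\mathscr S}(r)$ meets $f^{-1}(\{a_1,\dots,a_q\})$. The Calculus Lemma you invoke bounds $E_F(r)$ by a power of $A_F(r)=\int_{r_0}^{r}\frac{dt}{t}\int_{D^{\mathscr S}(t)}f^{*}\Phi$, and this quantity is infinite, so the argument stops there. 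Your assertion that ``the right-hand mean is comparable to $\|\mathscr S\|_{r,\sup}\cdot\tfrac{d}{dr}T^{\mathscr S}_{f,\omega}(r)$'' is exactly where the error lies: the mean is governed by the area of $f^{*}\Phi$, not of $f^{*}\omega$, and the two are not comparable.

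The paper avoids this by inserting the Ahlfors regularization factor $\log^{-2}u_{a_j}^{2}$ (equivalently $\log^{-2}\|s_j\|_j^{2}$) into $\Phi$, yielding
\[
\Phi=\frac{C\,\omega}{\prod_{j=1}^{q}u_{a_j}^{-2}\log^{2}u_{a_j}^{2}},
\]
which \emph{is} integrable on $\mathcal R$ with total mass $1$. Then the change of variables and the First Main Theorem give $A_F(r)\le c_0\,T^{\mathscr S}_{f,\omega}(r)$, and the Calculus Lemma finishes as you intended. The price is an extra term $-2\sum_j dd^c\log\log(u_{a_j}^{2}\circ f)$ in $dd^c[\log\xi]$; its boundary integral is controlled by Jensen's inequality as $\log m^{\mathscr S}_{f,\omega}(r,a_j)\le \log T^{\mathscr S}_{f,\omega}(r)+O(1)$, which is already inside the stated error. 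A secondary point: the passage from $N^{\mathscr S}_f$ to $\overline{N}^{\mathscr S}_f$ does not come from $T^{\mathscr S}(r,\mathscr R)$ as you wrote, but from the ramification divisor $D_{f,\mathrm{ram}}$ that appears in $dd^c\log\xi$ (since $\xi$ contains $|f'|^2$); one simply drops the nonnegative $N^{\mathscr S}(r,D_{f,\mathrm{ram}})$ after using $N^{\mathscr S}_f(r,a_j)-N^{\mathscr S}(r,D_{f,\mathrm{ram}})\le \overline{N}^{\mathscr S}_f(r,a_j)$. Your displayed identity with $\sum_j\bigl(m^{\mathscr S}_f+N^{\mathscr S}_f\bigr)$ on the right is also off; after applying Dynkin to $\log\xi$ you obtain $\sum_j N^{\mathscr S}_f(r,a_j)-N^{\mathscr S}(r,D_{f,\mathrm{ram}})$ directly, and no separate appeal to the First Main Theorem is needed at that stage.
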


\begin{remark} 
In order to better estimate $\|\mathscr S\|,$ we can put Hermitian metric $\alpha=\frac{\sqrt{-1}}{2}\mathscr S\wedge \overline{\mathscr S}$ induced by $\mathscr S$ on $\mathcal S.$ Under this metric, we have $\|\mathscr S\|\equiv1.$
Then the result in Theorem II  becomes 
 \begin{eqnarray*}
&& (q-2+2g)T^{\mathscr S}_{f,\omega}(r)+T^{\mathscr S}(r, \mathscr R) \\
&\leq& \sum_{j=1}^q \overline{N}^{\mathscr S}_f(r,a_j)+O\Big(\log T^{\mathscr S}_{f,\omega}(r)+\log\gamma(r)+\delta\log r\Big). 
 \end{eqnarray*}

\end{remark}

We consider a defect relation of $f$ in Nevanlinna theory.  The \emph{simple defect} $\bar\delta_f(a)$ of $f$ with respect to $a$ is defined by 
\begin{equation}\label{defe}
\bar\delta_f(a)=1-\limsup_{r\rightarrow R^{\mathscr S}}\frac{\overline{N}_f^{\mathscr S}(r,a)}{T_{f,\omega}^{\mathscr S}(r)}.
\end{equation}
If $f$ is nonconstant, then we can check   $T_{f,\omega}^{\mathscr S}(r)\geq O(\log r).$ By using the First Main Theorem given in Section 3.1.2, we see that  $0\leq\bar\delta_f(a)\leq1.$

By estimating the lower bound of $T^{\mathscr S}(r, \mathscr R),$ we  obtain a defect relation
\begin{thaiii} Assume the same conditions as in Theorem II. Suppose, in addition,  that $-C\leq K\leq0$ for a non-negative constant $C.$
If $f$ satisfies 
$$\limsup_{r\rightarrow R^{\mathscr S}}\frac{Cr^2\|\mathscr S\|^{-1}_{r,\inf}+\log(\gamma(r)\|\mathscr S\|_{r,\sup})}{T_{f,\omega}^{\mathscr S}(r)}=0,$$
where $$\|\mathscr S\|_{r, \inf}=\inf\big\{\|\mathscr S_x\|_h: |\hat x|<r\big\},$$
then we have the defect relation 
$$\sum_{j=1}^q\bar\delta_f(a_j)\leq 2-2g.$$
\end{thaiii}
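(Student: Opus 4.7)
The plan is to deduce the defect relation directly from the Second Main Theorem of Theorem II: divide by $T^{\mathscr S}_{f,\omega}(r)$, rearrange to isolate the defects, and take $\limsup$ along a sequence $r_n\to R^{\mathscr S}$ that avoids the exceptional set $E_\delta$.

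The core geometric input is an estimate of the Ricci characteristic. Writing the Hermitian metric locally as a density, one has the pointwise relation $\mathscr R=K\,\Omega_h$, where $\Omega_h$ is the area form of $h$; the hypothesis $-C\leq K\leq 0$ therefore gives $-C\,\Omega_h\leq\mathscr R\leq 0$, so $T^{\mathscr S}(r,\mathscr R)\leq 0$ and $-T^{\mathscr S}(r,\mathscr R)$ is controlled by $C$ times the Green-potential of $\Omega_h$ on $D^{\mathscr S}(r)$. Passing to the $\hat x$-coordinate, $\Omega_h$ becomes $\|\mathscr S\|_h^{-2}\cdot\tfrac{\sqrt{-1}}{2}\,dw\wedge d\bar w$ and $g_r(o,\cdot)$ pulls back to the Euclidean Green function on $|w|<r$, which leads to a bound of the form
$$-T^{\mathscr S}(r,\mathscr R)\leq O\bigl(Cr^{2}\,\|\mathscr S\|_{r,\inf}^{-1}\bigr).$$

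With this in hand, for $r\in(0,R^{\mathscr S})\setminus E_\delta$ the inequality of Theorem II rearranges as
\begin{align*}
q-(2-2g)\leq\sum_{j=1}^{q}\frac{\overline N_f^{\mathscr S}(r,a_j)}{T^{\mathscr S}_{f,\omega}(r)}\;&+\;\frac{-T^{\mathscr S}(r,\mathscr R)}{T^{\mathscr S}_{f,\omega}(r)}\\
&+\;\frac{O\bigl(\log T^{\mathscr S}_{f,\omega}(r)+\log\|\mathscr S\|_{r,\sup}+\log\gamma(r)+\delta\log r\bigr)}{T^{\mathscr S}_{f,\omega}(r)}.
\end{align*}
Since $\int_0^{R^{\mathscr S}}\gamma\,dr=\infty$ while $\int_{E_\delta}\gamma\,dr<\infty$, a sequence $r_n\to R^{\mathscr S}$ with $r_n\notin E_\delta$ exists. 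Along this sequence, the Ricci bound combined with the hypothesis $\limsup\bigl(Cr^{2}\|\mathscr S\|_{r,\inf}^{-1}+\log(\gamma(r)\|\mathscr S\|_{r,\sup})\bigr)/T^{\mathscr S}_{f,\omega}(r)=0$ kills both the curvature ratio and the $\log\gamma$, $\log\|\mathscr S\|_{r,\sup}$ contributions; because $T^{\mathscr S}_{f,\omega}(r)\gtrsim\log r$ for nonconstant $f$, the ratio $\log T^{\mathscr S}_{f,\omega}(r)/T^{\mathscr S}_{f,\omega}(r)$ is $o(1)$, and $\delta\log r/T^{\mathscr S}_{f,\omega}(r)$ can be made arbitrarily small by sending $\delta\to 0$ afterward. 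Subadditivity of $\limsup$ then yields
$$q-(2-2g)\leq\sum_{j=1}^{q}\limsup_{r\to R^{\mathscr S}}\frac{\overline N_f^{\mathscr S}(r,a_j)}{T^{\mathscr S}_{f,\omega}(r)}=q-\sum_{j=1}^{q}\bar\delta_f(a_j),$$
which is equivalent to the claim.

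The main obstacle is the Ricci-characteristic estimate: one must make the change of variables via $\hat x$ rigorous even when $\hat x:\mathcal S\to\mathbb C$ is not injective, and extract precisely the factor $r^{2}\|\mathscr S\|_{r,\inf}^{-1}$ that matches the growth hypothesis---anything weaker would force a stronger hypothesis and sacrifice generality, while anything sharper would require more delicate curvature control. Once this comparison is in place, the rest is a routine consequence of the Second Main Theorem.
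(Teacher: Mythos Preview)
Your plan matches the paper's: bound $T^{\mathscr S}(r,\mathscr R)$ from below using $-C\le K\le 0$, feed this into Theorem~II, divide by $T^{\mathscr S}_{f,\omega}(r)$, and take the $\limsup$ along a sequence $r_n\notin E_\delta$ tending to $R^{\mathscr S}$. One side remark: your concern about $\hat x$ failing to be injective is moot, since the paper's definition of a $\mathscr S$-Riemann surface includes the extra condition $L(x)=0\Leftrightarrow x=o$, which (as the paper notes) forces $\hat x=L$ to be univalent; the change of variables is therefore a genuine global diffeomorphism onto its image.

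The substantive gap is in your Ricci estimate. You correctly record that in the coordinate $w=\hat x$ the area form becomes $\|\mathscr S\|_h^{-2}$ times the Euclidean area element; but then the crude bound $\|\mathscr S\|_h^{-2}\le\|\mathscr S\|_{r,\inf}^{-2}$ together with $\int_{|w|<r}\log(r/|w|)\,dA=\pi r^{2}/2$ yields
\[
-T^{\mathscr S}(r,\mathscr R)\ \le\ O\bigl(Cr^{2}\,\|\mathscr S\|_{r,\inf}^{-2}\bigr),
\]
not the $O(Cr^{2}\|\mathscr S\|_{r,\inf}^{-1})$ you assert. Since the growth hypothesis of Theorem~III is phrased with exponent $-1$, your pullback argument as written does not close. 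The paper reaches the exponent $-1$ by a different computation: it decomposes $\int_{D^{\mathscr S}(r)}g_r\,dV$ over the $\mathscr S$-circles and uses Lemma~\ref{le2} to rewrite $d\sigma_t=\tfrac{2\pi t}{\|\mathscr S\|_h}\,d\pi_o^t$, so that only a single factor of $\|\mathscr S\|_h^{-1}$ is introduced and the fact that $d\pi_o^t$ is a probability measure gives $\int_0^r t\log(r/t)\,dt\big/\|\mathscr S\|_{r,\inf}=r^{2}/(4\|\mathscr S\|_{r,\inf})$. You should either adopt that computation or explain how your pullback saves one power of $\|\mathscr S\|$; as it stands, the exponent mismatch is exactly the ``anything weaker would force a stronger hypothesis'' scenario you flagged yourself.
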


The earlier study of Nevanlinna theory for  non-compact Riemann surfaces  dates back to the work of  H. Wu \cite{wu} (also refer  to Shabat \cite{Shabat}). 
More  details  on this aspect,  the readers may refer  to the recent  very nice papers of He-Ru \cite{he-ru} and  P${\rm{\breve{a}}}$un-Sibony \cite{Sibony-Paun}.   
We have to indicate that the paper  presents  a  new method   of studying  Nevanlinna theory by putting a metric $h$ on $\mathcal S.$ However, 
 we  should   also point out  that    our results can be  derived by 
  H. Wu's method  (as well as He-Ru's arguments)     since 
  $\mathscr S$ produces an exhaustion function
$$\sigma(x):=\left|\int_o^x\mathscr S\right|$$ on $\mathcal S,$ so that  $\log \sigma^2$ is harmonic outside a compact set. 
 We  refer the readers to the standard arguments of He-Ru \cite{he-ru} without going into any details. 

\section{$\mathscr S$-exhausted Riemann sufaces}

\subsection{$\mathscr S$-exhausted Riemann sufaces}~

Let $\mathcal S$ be a non-compact  Riemann surface. Gunning proved  that 
\begin{proposition}[Gunning, \cite{GR}]\label{pp} Let $\mathcal S$ be a non-compact  Riemann surface. Then there is a holomorphic form $\mathscr S$ on $\mathcal S$ satisfying the following conditions

$(a)$ $\mathscr S$ has no zeros on $\mathcal S;$

$(b)$ $\int_\gamma\mathscr S=0$ for each smooth simple closed curve $\gamma$ in $\mathcal S.$
\end{proposition}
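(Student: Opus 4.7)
The plan is to realize $\mathscr{S}$ as the differential of a globally defined holomorphic function $\hat{x}\colon\mathcal{S}\to\mathbb{C}$, so that the proposition reduces to exhibiting a holomorphic \emph{immersion} of $\mathcal{S}$ into $\mathbb{C}$. Indeed, any holomorphic $1$-form on a Riemann surface is automatically $d$-closed (for type reasons), and every homology class in $H_{1}(\mathcal{S},\mathbb{Z})$ admits a smooth simple closed representative, so condition (b) is equivalent to $\mathscr{S}$ being exact. Writing $\mathscr{S}=d\hat{x}$, condition (a) then becomes the requirement that $d\hat{x}$ have no zeros, i.e.\ that $\hat{x}$ be an immersion. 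Thus the proposition is exactly the statement that every open Riemann surface embeds by a holomorphic immersion into the plane.

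The existence ingredient comes from Stein theory. By the Behnke--Stein theorem, every non-compact Riemann surface $\mathcal{S}$ is Stein, so $H^{1}(\mathcal{S},\mathcal{O})=0$. Combined with $H^{2}(\mathcal{S},\mathbb{Z})=0$ (since $\mathcal{S}$ is a non-compact real surface), the exponential sheaf sequence gives $H^{1}(\mathcal{S},\mathcal{O}^{*})=0$, so every holomorphic line bundle on $\mathcal{S}$ is trivial; in particular the canonical bundle is trivial and there exists a nowhere-vanishing holomorphic $1$-form $\omega_{0}$. Moreover, the short exact sequence $0\to\mathbb{C}\to\mathcal{O}\xrightarrow{d}\Omega^{1}\to 0$, combined with $H^{1}(\mathcal{S},\mathcal{O})=0$, shows that every class of $H^{1}(\mathcal{S},\mathbb{C})$ is represented by a global holomorphic $1$-form. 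Hence one can find a holomorphic $\alpha$ with $[\alpha]=-[\omega_{0}]$, producing a holomorphic function $g$ on $\mathcal{S}$ with $dg=\omega_{0}+\alpha$; this $g$ gives a candidate $\hat{x}=g$ satisfying (b), but possibly not (a).

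The main obstacle, and the delicate step, is that $dg$ can acquire zeros even though $\omega_{0}$ did not, and one cannot simply modify $dg$ inside its cohomology class by another exact holomorphic $1$-form while retaining nowhere-vanishing; the two constructions have to be interleaved. To accomplish this I would run the Gunning--Narasimhan scheme: fix a Runge exhaustion $K_{1}\Subset K_{2}\Subset\cdots$ of $\mathcal{S}$ and build inductively holomorphic functions $g_{n}$ on neighborhoods of $K_{n}$ such that $dg_{n}$ vanishes nowhere on $K_{n}$ and $g_{n+1}$ approximates $g_{n}$ to within $2^{-n}$ on $K_{n}$ (together with a matching bound on derivatives). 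The inductive step combines the local nowhere-vanishing forms coming from the Stein argument above with Runge approximation on $\mathcal{S}$: starting from $g_{n}$, one extends it roughly to $K_{n+1}$, locates the finitely many zeros of the differential inside $K_{n+1}$, and cancels each of them by adding a small global holomorphic perturbation whose value and differential at those finitely many points are prescribed (produced by the Weierstrass/Runge theorem on the Stein surface $\mathcal{S}$). A Cauchy-type schedule on the sizes of these corrections guarantees locally uniform convergence $g_{n}\to\hat{x}$ with $d\hat{x}$ still nowhere vanishing. Setting $\mathscr{S}=d\hat{x}$ then satisfies (a) by construction and (b) automatically from exactness.
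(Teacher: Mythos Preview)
The paper does not prove this proposition; it simply cites Gunning--Narasimhan \cite{GR} and uses the result as a black box. Your first paragraph correctly identifies the equivalent formulation---existence of a holomorphic immersion $\hat{x}\colon\mathcal{S}\to\mathbb{C}$---which is precisely the theorem of \cite{GR}, and your second paragraph assembles the Stein-theoretic ingredients (Behnke--Stein, triviality of the canonical bundle, surjectivity of $d$ onto closed holomorphic $1$-forms) correctly.

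The third paragraph, however, has a genuine gap. Your inductive step is to extend $g_{n}$ over $K_{n+1}$, locate the finitely many zeros of the differential in $K_{n+1}\setminus K_{n}$, and then ``cancel'' them by adding a small holomorphic perturbation with prescribed $1$-jet at those points. But small perturbations cannot remove zeros of a holomorphic function: if $dg$ vanishes to order $k$ at $p$ and is nonzero on the boundary of a small disc about $p$, then by Rouch\'e's theorem $d(g+\epsilon)$ still has exactly $k$ zeros (counted with multiplicity) in that disc for every sufficiently small $\epsilon$. Prescribing $d\epsilon(p)\neq 0$ merely moves the zero off $p$, not out of the disc. The zeros therefore persist through the limit, and $d\hat{x}$ need not be nowhere-vanishing.

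The actual argument in \cite{GR} avoids this obstruction by reversing the order of the two conditions: fix once and for all a nowhere-vanishing holomorphic $1$-form $\omega_{0}$ (which you have already produced), and look for a holomorphic function $h$ such that $e^{h}\omega_{0}$ has all periods equal to zero. Since $e^{h}$ never vanishes, condition (a) is automatic; the Runge-type induction is then used only to kill the periods over a basis of $H_{1}$, a finite system of constraints at each stage that small corrections \emph{can} achieve. This multiplicative device is the missing idea in your sketch.
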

\begin{re} Condition  $(a)$ defines a holomorphic field $X$ without zeros on $\mathcal S$ in the following manner$:$ write 
$\mathscr S=\phi dz$ in a local holomorphic coordinate $z,$ it is trivial to  check that
$$X=\phi^{-1}\frac{\partial}{\partial z}$$
is well defined on $\mathcal S.$ Hence, $X$ is a  nowhere-vanishing holomorphic field that is dual to $\mathscr S.$
Condition  $(b)$ defines a holomorphic function $\hat x: \mathcal S\rightarrow \mathbb C$ by 
\begin{equation}\label{def}
L(x)=\int_o^x\mathscr S:=\hat x,
\end{equation}
where $o$ is a fixed point in $S.$ By $(a),$ we see that $o$ is a simple zero of $L.$
\end{re}
In this paper, besides   conditions $(a)$ and $(b),$ one  assumes that  $\mathscr S$ satisfies an additional condition: 
 $L(x)=0$ if and only if $x=o.$ This condition forces  $L$ to be a univalent function.

\begin{defi} Let $\mathscr S$ be a holomorphic form on $\mathcal S$ satisfying the conditions $(a)$ and $(b)$  in Proposition {\rm{\ref{pp}}}.  If, in addition, that $\mathscr S$ satisfies  the condition$:$ 
$L(x)=0$ if and only if $x=o,$
where $L$ is defined by $(\ref{def}),$ then we say that $\mathcal S$ is a $\mathscr S$-Riemann surface. 
\end{defi}
We define   $\mathscr S$-discs and  $\mathscr S$-circles on a $\mathscr S$-Riemann surface $\mathcal S.$ 
\begin{defi}  Let $(\mathcal S; \mathscr S)$ be a $\mathscr S$-Riemann surface. 
The $\mathscr S$-disc $D^{\mathscr S}(r)$ is defined by
$$D^{\mathscr S}(r)=\big\{x\in \mathcal S: |\hat x|<r\big\},$$
and the $\mathscr S$-circle $C^{\mathscr S}(r)$ is defined by
$$C^{\mathscr S}(r)=\big\{x\in \mathcal S: |\hat x|=r\big\}.$$
 \end{defi}
Since $L$ is univalent, then $D^{\mathscr S}(r)$ is simply connected for all $r>0.$ Notice that the case $C^{\mathscr S}(r)=\emptyset$ may happen if $r$ is sufficiently large, since $D^{\mathscr S}(r)$ could cover the whole surface  $\mathcal S$ when $r$ is large enough in some situations. For example, we consider the case where $\mathcal S=\mathbb D$ and $\mathscr S=dz,$ then  $C^{\mathscr S}(r)=\{z\in\mathbb D: |z|=r\}.$ If $r>1,$ then we have $C^{\mathscr S}(r)=\emptyset.$

\begin{defi}\label{ddi}  We say  that $\mathscr S$ is an exhaustion form, if $\{D^{\mathscr S}(r_n)\}_{n=1}^\infty$ exhausts $\mathcal S$ whenever $0<r_1<r_2<\cdots$ and $r_n\rightarrow +\infty$ as $n\rightarrow+\infty.$ Namely, 
$\mathcal S=\cup_{n=1}^\infty D^{\mathscr S}(r_n)$ and one of the following holds

$(i)$ \  $D^{\mathscr S}(r_1)\subset\overline{D^{\mathscr S}(r_1)}\subset D^{\mathscr S}(r_2)\subset\overline{D^{\mathscr S}(r_2)}\subset\cdots\subset\mathcal S;$

$(ii)$ \ there exists an integer  $k\geq1$ such that 
$$D^{\mathscr S}(r_1)\subset\overline{D^{\mathscr S}(r_1)}\subset\cdots\subset D^{\mathscr S}(r_k)=D^{\mathscr S}(r_{k+1})=\cdots=\mathcal S.$$
Precisely, we say that $\mathscr S$ is a parabolic exhaustion form if  condition $(i)$ is satisfied, and say that  $\mathscr S$ is a hyperbolic exhaustion form if  condition $(ii)$ is satisfied.
\end{defi}

\begin{defi}\label{ddi}  Let $(\mathcal S; \mathscr S)$ be a $\mathscr S$-Riemann surface.  We say that $\mathcal S$ is a $\mathscr S$-exhausted Riemann surface if $\mathscr S$ is an exhaustion form on $\mathcal S.$
\end{defi}

If $\mathcal S$ is a  $\mathscr S$-exhausted Riemann surface, then $C^{\mathscr S}(r)$  (for  $r>0$)  is a closed curve  contained properly  in $\mathcal S$ whenever 
$C^{\mathscr S}(r)\cap\mathcal S\not=\emptyset.$

\begin{defi}\label{ppp}  Let $(\mathcal S; \mathscr S)$ be a  $\mathscr S$-exhausted Riemann surface. Define $R^{\mathscr S}$ by 
$$R^{\mathscr S}=\sup\big\{r>0: C^{\mathscr S}(r)\cap\mathcal S\not=\emptyset\big\},$$
which is called the $\mathscr S$-radius of $\mathcal S$ with respect to $o.$ 
\end{defi}
From  the definition as above, we  have that $0<R^{\mathscr S}\leq\infty$ and  $C^{\mathscr S}(r)$ is a simple closed curve in $\mathcal S$ for $0<r<R^{\mathscr S}.$ 
Note  that the following is a direct consequence of Definition \ref{ddi} and Definition \ref{ppp}.
\begin{cor}\label{lem1}  Let $(\mathcal S; \mathscr S)$ be a  $\mathscr S$-exhausted Riemann surface of $\mathscr S$-radius $R^{\mathscr S}$ with respect to $o.$ Let $\{r_n\}_{n=1}^\infty$ be a   sequence of  positive integers $r_n$ with  
$r_1<r_2<\cdots$ and 
$r_n\rightarrow R^{\mathscr S}$ as $n\rightarrow +\infty.$ Then $\{D^{\mathscr S}(r_n)\}_{n=1}^\infty$ exhausts $\mathcal S.$
\end{cor}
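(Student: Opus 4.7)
The plan is to deduce the corollary directly from the definitions, with all of the analytic content packed into a single claim: the supremum appearing in Definition \ref{ppp} is never attained. Since $\mathscr S$ is nowhere-vanishing and $L$ is univalent (the standing assumption for a $\mathscr S$-Riemann surface), the map $L\colon\mathcal S\to\hat{\mathcal S}\subset\mathbb C$ is a biholomorphism onto an open subset of $\mathbb C$; in particular $\hat{\mathcal S}$ is open in $\mathbb C$. First I would observe that $C^{\mathscr S}(r)\cap\mathcal S\neq\emptyset$ if and only if $r=|\hat x|$ for some $x\in\mathcal S$, which immediately gives
$$R^{\mathscr S}=\sup_{x\in\mathcal S}|\hat x|.$$

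Next I would show that this supremum is strict: if it were attained at some $x_0\in\mathcal S$, openness of $\hat{\mathcal S}$ would furnish a small Euclidean disk about $\hat{x_0}$ lying entirely in $\hat{\mathcal S}$ and containing points of modulus $>R^{\mathscr S}$, which contradicts the definition of $R^{\mathscr S}$. Hence $|\hat x|<R^{\mathscr S}$ for every $x\in\mathcal S$. With this in hand the corollary falls out. Given $x\in\mathcal S$, since $|\hat x|<R^{\mathscr S}$ and $r_n\to R^{\mathscr S}$, there exists $N$ with $r_n>|\hat x|$ for all $n\geq N$, whence $x\in D^{\mathscr S}(r_n)$ for such $n$ and thus $\mathcal S=\bigcup_{n=1}^\infty D^{\mathscr S}(r_n)$. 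The nested-closure clause $(i)$ in the definition of exhaustion is then automatic from the strict monotonicity of $\{r_n\}$, via
$$\overline{D^{\mathscr S}(r_n)}\subset\{x\in\mathcal S:|\hat x|\leq r_n\}\subset D^{\mathscr S}(r_{n+1}).$$

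The only non-formal step is the non-attainment of the supremum, and this is the sole place where any analytic input (the open mapping theorem for the biholomorphism $L$) is used. It is precisely what makes the statement uniform over both the parabolic case $R^{\mathscr S}=\infty$, where the conclusion is a direct restatement of Definition \ref{ddi}, and the hyperbolic case $R^{\mathscr S}<\infty$, which is not covered verbatim by that definition because there the sequence $\{r_n\}$ never escapes to infinity.
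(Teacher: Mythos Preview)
The paper offers no proof at all beyond the sentence ``the following is a direct consequence of Definition~\ref{ddi} and Definition~\ref{ppp}.'' Your argument is correct and supplies exactly the detail that is missing there. In particular, you correctly isolate the one non-formal point: in the hyperbolic case $R^{\mathscr S}<\infty$, Definition~\ref{ddi} speaks only of sequences $r_n\to+\infty$, so one must show that $\bigcup_n D^{\mathscr S}(r_n)=D^{\mathscr S}(R^{\mathscr S})$ already equals $\mathcal S$, i.e., that the supremum $R^{\mathscr S}=\sup_{x\in\mathcal S}|\hat x|$ is not attained. Your use of the openness of $L(\mathcal S)\subset\mathbb C$ (which follows from the standing univalence assumption on $L$) is the natural way to see this, and is presumably what the authors have in mind when they call the corollary a direct consequence.

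One small remark: your proof does not actually invoke the exhaustion hypothesis from Definition~\ref{ddi} at all; the argument goes through for any $\mathscr S$-Riemann surface once $R^{\mathscr S}$ is defined as $\sup_{x}|\hat x|$. So in effect you have shown slightly more than stated, which is consistent with the paper's view that the exhaustion property is already encoded in the structure of $L$.
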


\begin{ex} Let $\mathbb C$ be the standard complex Euclidean plane, then $\mathbb C$  is a $dz$-exhausted Riemann surface of $dz$-radius 
$\infty$ with respect to $0.$ We obtain $\hat z=z,$ $D^{dz}(r)=\{z\in\mathbb C: |z|<r\}$ and  $C^{dz}(r)=\{z\in\mathbb C: |z|=r\}.$ 
\end{ex}
\begin{ex} Equipping $\mathbb D$ with standard Euclidean metric, then $\mathbb D$  is  a  $dz$-exhausted Riemann surface of $dz$-radius $1$ with respect to $0.$ We have $\hat z=z,$ $D^{dz}(r)=\{z\in\mathbb D: |z|<r\}$ and  $C^{dz}(r)=\{z\in\mathbb D: |z|=r\}.$ 
\end{ex}

\subsection{Harmonic measures on $\mathscr S$-circles}~

Let $(\mathcal S, h; \mathscr S)$ be a  $\mathscr S$-exhausted Hermitian Riemann surface of $\mathscr S$-radius $R^{\mathscr S}$ with respect to $o.$  We denote by $\Delta$ the Laplace-Beltrami operator on $\mathcal S$ defined by $h.$
For $0<r<R^{\mathscr S},$
we shall compute   Green function $ g_r(o,x)$   of $\Delta/2$ for $D^{\mathscr S}(r)$ with Dirichlet boundary and a pole at $o,$ i.e., 
$$-\frac{1}{2}\Delta  g_r(o,x)=\delta_o(x), \ x\in D^{\mathscr S}(r); \ \  g_r(o,x)=0, \ x\in C^{\mathscr S}(r)$$
in the sense of distributions,  as well as  harmonic measure $d\pi^r_o$   on $C^{\mathscr S}(r)$ with respect to $o,$ for  $0<r<R^{\mathscr S}.$ We have the following well-known formula 
$$d\pi_o^r(x)=-\frac{1}{2}\frac{\partial  g_r(o,x)}{\partial \vec n}d\sigma_r(x),$$
where $\partial/\partial \vec{n}$ is the intward normal derivative on $C^{\mathscr S}(r).$  
\begin{lemma}\label{le1} For $0<r<R^{\mathscr S},$ we have
$$ g_r(o,x)=\frac{1}{\pi}\log\frac{r}{|\hat x|}.$$
\end{lemma}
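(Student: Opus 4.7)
The plan is to pull the Green-function problem back to the Euclidean disc through the biholomorphism $L:D^{\mathscr S}(r)\to\{w\in\mathbb C:|w|<r\}$, $x\mapsto \hat x$, which is univalent with $L(o)=0$ by the standing assumption on $\mathscr S$. On the Euclidean disc the Dirichlet Green function of $-\tfrac{1}{2}\Delta_{\mathrm{Eucl}}$ with pole at $0$ is classically $\tfrac{1}{\pi}\log(r/|w|)$, so I would directly verify that $g(x):=\tfrac{1}{\pi}\log(r/|\hat x|)$ satisfies the three defining properties of $g_r(o,\cdot)$ on the Hermitian surface $(\mathcal S,h)$ and conclude by uniqueness of the Dirichlet Green function on the relatively compact simply-connected domain $D^{\mathscr S}(r)$.

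Two of the three properties are essentially immediate: on $C^{\mathscr S}(r)$ we have $|\hat x|=r$, so $g\equiv 0$; and on $D^{\mathscr S}(r)\setminus\{o\}$ the function $\hat x$ is holomorphic and nonvanishing, so $\log|\hat x|$ is locally the real part of a branch of $\log\hat x$, hence harmonic by conformal invariance of the kernel of $\Delta$ on a Riemann surface (only the complex structure enters the pointwise condition $\partial_z\partial_{\bar z}u=0$, so the choice of Hermitian $h$ is irrelevant). The real work is the singularity at $o$: I would take $w=\hat x$ itself as an isothermal coordinate around $o$ (valid because $L'(o)\neq 0$), write $h=h_0|dw|^2$ locally, and combine $\Delta=\tfrac{4}{h_0}\partial_w\partial_{\bar w}$ with the standard distributional identity $\partial_w\partial_{\bar w}\log|w|^2=\pi\,\delta_0^{\mathrm{Eucl}}$ to get
\[
-\tfrac{1}{2}\Delta g \;=\; -\tfrac{1}{2}\cdot\tfrac{4}{h_0}\,\partial_w\partial_{\bar w}\!\left(\tfrac{1}{\pi}\log r-\tfrac{1}{2\pi}\log|w|^2\right) \;=\; \tfrac{1}{h_0}\,\delta_0^{\mathrm{Eucl}}.
\]
Pairing against a test function $\phi$ with the Riemannian volume form $\omega_h=h_0\,dA_{\mathrm{Eucl}}$ returns $\phi(o)$, i.e.\ $-\tfrac{1}{2}\Delta g=\delta_o$ distributionally on $(\mathcal S,h)$.

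The only point that genuinely needs care is the normalization of $\delta_o$: because the claimed formula is independent of $h$, the Dirac mass in the stated PDE must be interpreted with respect to $\omega_h$, so that the local conformal factor $h_0$ cancels between the operator $\Delta$ and the Riemannian pairing. Once this convention is fixed, the argument is merely a conformal transport of the classical Euclidean Green function through $L$, and the uniqueness of the Dirichlet Green function closes the proof.
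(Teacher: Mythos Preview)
Your proposal is correct and follows essentially the same approach as the paper: both proofs verify directly that the candidate $\tfrac{1}{\pi}\log(r/|\hat x|)$ satisfies the defining properties of the Dirichlet Green function (boundary condition, harmonicity away from $o$, correct singularity at $o$), using that $L$ is holomorphic with a unique simple zero at $o$. Your treatment is in fact more careful than the paper's, which simply asserts $\tfrac{1}{2\pi}\Delta\log|L(x)|=\delta_o(x)$ without addressing the Riemannian-versus-Euclidean normalization of $\delta_o$ that you rightly flag and resolve.
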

\begin{proof}
Since  $L$ is a holomorphic function on $\mathcal S$ with a unique simple zero   $o,$ then  it  follows that  
$$\frac{1}{2\pi}\Delta\log|L(x)|=\delta_o(x), \  \ x\in  D^{\mathscr S}(r)$$
in the sense of distribution. 
Indeed,  it is clear that  $|L(x)|=r$ for $x\in C^{\mathscr S}(r).$ Hence, we obtain  
$$g_r(o,x)=\frac{1}{\pi}\log\frac{r}{|\hat x|}.$$
This completes the proof.
\end{proof}

\begin{lemma}\label{le2} For $0<r<R^{\mathscr S},$ we have
$$d\pi^r_o= \frac{\|\mathscr S\|_h}{2\pi r}d\sigma_r,$$
where $d\sigma_r$ is  the Riemannian line element on  $C^{\mathscr S}(r)$ defined by $h.$
\end{lemma}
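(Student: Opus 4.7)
My plan is to combine the boundary representation $d\pi^r_o = -\frac{1}{2}(\partial g_r/\partial \vec n) d\sigma_r$ with the explicit Green function of Lemma \ref{le1}, and to carry out the normal-derivative computation in the global holomorphic coordinate $w := L(x) = \hat x$ supplied by the form $\mathscr S$. Because $\mathscr S$ vanishes nowhere and $L$ is univalent, $w$ is a genuine global holomorphic coordinate on (the connected pieces of) $\mathcal S$ that we care about, and in this coordinate $\mathscr S = dw$ while $C^{\mathscr S}(r) = \{|w| = r\}$. This reduces the entire computation to one on the standard Euclidean disc, with a conformal factor encoding $h$.

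Writing the Hermitian metric in the conformal form $h = e^{2u(w)} |dw|^2$ makes two quantities immediately readable: first, the norm of the holomorphic form is $\|\mathscr S\|_h = \|dw\|_h = e^{-u}$; second, parametrizing $C^{\mathscr S}(r)$ by $w = re^{i\theta}$ yields $d\sigma_r = e^u r\, d\theta$, so that $r\, d\theta = \|\mathscr S\|_h \, d\sigma_r$. The remaining step is to compute $\partial g_r/\partial \vec n$ on $C^{\mathscr S}(r)$: by Lemma \ref{le1}, $g_r = (1/\pi)(\log r - \log|w|)$, whose Euclidean radial derivative at $|w| = r$ equals $-1/(\pi r)$, and converting to the Riemannian unit outward normal introduces an extra factor of $e^{-u} = \|\mathscr S\|_h$ (since the Euclidean radial vector $\partial_\rho$ has $h$-length $e^u$). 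Substituting $\partial g_r/\partial \vec n = -\|\mathscr S\|_h / (\pi r)$ into the boundary formula yields $d\pi_o^r = \|\mathscr S\|_h \, d\sigma_r / (2\pi r)$, exactly the asserted identity.

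The only point requiring genuine care is keeping the sign conventions for the normal direction consistent so that the harmonic measure comes out positive, and being explicit about how the Riemannian unit normal relates to the Euclidean radial direction $\partial_\rho$ through the conformal factor. Once the global coordinate $L$ is installed, every step reduces to a one-line calculation, and no essential obstacle is anticipated.
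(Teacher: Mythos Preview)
Your proposal is correct and follows essentially the same route as the paper: both compute the normal derivative of the explicit Green function $g_r(o,x)=\frac{1}{\pi}\log\frac{r}{|\hat x|}$ from Lemma~\ref{le1} and substitute into the boundary formula $d\pi_o^r=-\tfrac12(\partial g_r/\partial\vec n)\,d\sigma_r$. The only difference is cosmetic: the paper picks, at each boundary point $x_0$, a local holomorphic coordinate $z$ with $h|_{x_0}=1$, writes $\mathscr S=\phi\,dz$, and computes $\partial|L|/\partial\vec n=|\phi(x_0)|=\|\mathscr S_{x_0}\|_h$ via the gradient of $|L|$ in real coordinates, whereas you work directly in the global coordinate $w=\hat x$, in which the level sets are genuine Euclidean circles and the conformal factor $e^{2u}$ of $h$ carries all the geometry---a slightly cleaner packaging of the same calculation.
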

\begin{proof}
Fix an arbitrary point $x_0\in C^{\mathscr S}(r),$ we   take a  holomorphic coordinate $z$  near $x_0$ such that $h|_{x_0}=1.$  Note that  
$$d\pi_o^r(x)=-\frac{1}{2}\frac{\partial  g_r(o,x)}{\partial \vec n}d\sigma_r(x),$$
where $\partial/\partial \vec{n}$ is the intward normal derivative on $\partial C^{\mathscr S}(r).$ By $|L(x_0)|=r$     
$$d\pi_o^r(x_0)=\frac{1}{2\pi r}\frac{\partial |L(x_0)|}{\partial \vec n}d\sigma_r(x_0).$$
Set $z=\xi_1+\sqrt{-1}\xi_2.$   Differentiating the equation $|L|=r$ near $x_0,$ then    
$$\frac{\partial|L|}{\partial \xi_1}d\xi_1+\frac{\partial|L|}{\partial \xi_2}d\xi_2=0,$$
which gives an inward normal vector 
$$\vec{n}|_{x_0}=\Bigg(\frac{\partial|L(x_0)|/\partial \xi_1}{\sqrt{\big (\frac{\partial|L(x_0)|}{\partial \xi_1}\big)^2+\big (\frac{\partial|L(x_0)|}{\partial \xi_2}\big)^2}}, \ 
\frac{\partial|L(x_0)|/\partial \xi_2}{\sqrt{\big (\frac{\partial|L(x_0)|}{\partial \xi_1}\big)^2+\big (\frac{\partial|L(x_0)|}{\partial \xi_2}\big)^2}}\Bigg).$$
Write $\mathscr S=\phi dz$ in the local coordinate $z$, a direct computation follows that 
$$\frac{\partial |L(x_0)|}{\partial \xi_1}=\Re\bigg[\phi(x_0)\frac{\overline{L(x_0)}^{1/2}}{L(x_0)^{1/2}}\bigg], \ \  
\frac{\partial |L(x_0)|}{\partial \xi_2}=-\Im\bigg[\phi(x_0)\frac{\overline{L(x_0)}^{1/2}}{L(x_0)^{1/2}}\bigg].
$$
Then 
$$\frac{\partial |L(x_0)|}{\partial \vec n}=\Big(\frac{\partial |L(x_0)|}{\partial \xi_1}, \frac{\partial |L(x_0)|}{\partial \xi_2}\Big)\cdot\vec n|_{x_0}=|\phi(x_0)|=\|\mathscr S{x_0}\|_h.$$
We conclude that 
$$d\pi^r_o(x_0)= \frac{\|\mathscr S{x_0}\|_h}{2\pi r}d\sigma_r(x_0).$$
This completes the proof.
\end{proof}

\section{Value distribution of meromorphic functions on $\mathcal S$}

\subsection{First Main Theorem}~ 

\subsubsection{Notations}~ 

Let $(\mathcal S, h; \mathscr S)$ be a  $\mathscr S$-exhausted Hermitian Riemann surface of $\mathscr S$-radius $R^{\mathscr S}$ with respect to $o.$ Using $\mathscr S,$ we  extend the notion of Nevanlinna's functions  to  $\mathcal S.$ 
Let $f$ be a meromorphic function on $\mathcal S$ and let  $r$ be satisfied with   $0<r_0<r<R^{\mathscr S}.$ Viewing $f=f_1/f_0=[f_0: f_1]$ as a holomorphic mapping from $\mathcal S$  into $\mathbb P^1(\mathbb C).$ We define the \emph{characteristic function}  of $f$ by
$$T^{\mathscr S}_f(r)=\int_{r_0}^r\frac{dt}{t}\int_{D^{\mathscr S}(t)}f^*\omega_{FS},$$
where $\omega_{FS}=dd^c\log(|\zeta_0|^2+|\zeta_1|^2)$ and 
$$d=\partial+\bar\partial, \ \ d^c=\frac{\sqrt{-1}}{4\pi}(\bar\partial-\partial), \ \ dd^c=\frac{\sqrt{-1}}{2\pi}\partial\bar\partial.$$
By integration by part,  it yields  that 
\begin{eqnarray*}
T^{\mathscr S}_f(r)&=&\int_{D^{\mathscr S}(r)} \log\frac{r}{|\hat x|}dd^c\log(|f_0(x)|^2+|f_1(x)|^2) \\
&&-\int_{D^{\mathscr S}(r_0)} \log\frac{r_0}{|\hat x|}dd^c\log(|f_0(x)|^2+|f_1(x)|^2).
\end{eqnarray*}
Locally, we write 
$$dV=\sqrt{-1}hdz\wedge d\bar z, \ \ \Delta=\frac{2}{h}\frac{\partial^2}{\partial z\partial\bar z}.$$
where $dV$ is the Riemannian area element  of $\mathcal S.$ 
We get  
$$dd^c\log(|f_0|^2+|f_1|^2)=\frac{1}{4\pi}\Delta\log(|f_0|^2+|f_1|^2)dV.$$
Notice (see Lemma \ref{le1}) that 
$$ g_r(o,x)=\frac{1}{\pi}\log\frac{r}{|\hat x|},$$
 then $T^{\mathscr S}_f(r)$ can be written in terms of Green function as follows
 \begin{eqnarray}\label{cha}
T^{\mathscr S}_f(r)&=&\frac{1}{4}\int_{D^{\mathscr S}(r)} g_r(o,x)\Delta\log(|f_0(x)|^2+|f_1(x)|^2)dV(x) \\
&&- \frac{1}{4}\int_{D^{\mathscr S}(r_0)} g_{r_0}(o,x)\Delta\log(|f_0(x)|^2+|f_1(x)|^2)dV(x). \nonumber
\end{eqnarray}
 Let  $a=[a_0:a_1]\in\mathbb P^1(\mathbb C)$ such that $f\not\equiv a.$  The \emph{proximity function} of $f$ with respect to $a$ is defined by
 \begin{equation}\label{pro}
m^{\mathscr S}_f(r,a)=\int_{C^{\mathscr S}(r)}\log\frac{1}{\|f,a\|}d\pi_o^r,
 \end{equation}
 where  $\|\cdot  , \cdot \|$ is the spherical distance on $\mathbb P^1(\mathbb C),$ defined by
 $$\|f, a\|=\frac{|\langle f; a \rangle|}{\|f\|\|a\|}=\frac{|a_0f_1-a_1f_0|}{\sqrt{|a_0|^2+|a_1|^2}\sqrt{|f_0|^2+|f_1|^2}},$$
 where 
 $$\langle f; a \rangle:=a_0f_1-a_1f_0.$$
Using Lemma \ref{le1}, we obtain 
$$m^{\mathscr S}_f(r,a)=\frac{1}{2\pi r}\int_{C^{\mathscr S}(r)}\|\mathscr S\|_h\log\frac{1}{\|f,a\|}d\sigma_r,$$
where $d\sigma_r$ is  the Riemannian line element on  $C^{\mathscr S}(r).$ 
The \emph{counting function} of $f$ with respect to $a$ is defined by 
$$N^{\mathscr S}_{f}(r,a)=\int_{r_0}^r\frac{n^{\mathscr S}_f(t, a)}{t}dt,$$
where $n_f^{\mathscr S}(r, a)$ denotes the number of the zeros of $f-a$ on $D^{\mathscr S}(r)$ counting multiplicities.  By Poincar\'e-Lelong formula \cite{gri}, we see that 
$$N^{\mathscr S}_{f}(r,a)=\int_{r_0}^r\frac{dt}{t}\int_{D^{\mathscr S}(t)}dd^c[\log|\langle f; a \rangle|^2],$$
where $dd^c[\log|\langle f; a \rangle|^2]$ is a current \cite{Noguchi}.  Similarly, in terms of Green function, there is an alternate expression 
 \begin{eqnarray}\label{cou}
N^{\mathscr S}_{f}(r,a)&=&\frac{1}{4}\int_{D^{\mathscr S}(r)} g_r(o,x)\Delta\log|\langle f(x); a \rangle|^2dV(x)  \\
&&-\frac{1}{4}\int_{D^{\mathscr S}(r_0)} g_{r_0}(o,x)\Delta\log|\langle f(x); a \rangle|^2dV(x),  \nonumber
 \end{eqnarray}
where $\Delta\log|\langle f; a \rangle|^2$ is  understood as a distribution. Similarly, one can define the \emph{simple counting function} 
$\overline{N}^{\mathscr S}_{f}(r,a)$ of $f$ with respect to $a,$ 
 which measures the size of the set of  zeros of $f-a$ without counting multiplicities. 

We define several  other symbols.  Let $\mathscr R$ be the Ricci form of $(\mathcal S, h),$ i.e.,
$$\mathscr R=Ric(\alpha)=-dd^c\log h,$$
where  
$$\alpha=\frac{\sqrt{-1}}{\pi}hdz\wedge d\bar z$$
is the K\"ahler form of  $\mathcal S.$ 
The \emph{characteristic} of  $\mathscr R$ is defined by 
$$T^{\mathscr S}(r, \mathscr R)=\int_{r_0}^r\frac{dt}{t}\int_{D^{\mathscr S}(t)}\mathscr R.$$
Since the Gauss curvature $K$  of $h$ is computed by 
$$K=-\frac{1}{2}\Delta\log h=-\frac{1}{h}\frac{\partial^2\log h}{\partial z\partial\bar z},$$
then we obtain  
$$2\mathscr R=K\alpha.
$$
By this with the above,  we see that the characteristic of  $\mathscr R$ can be expressed in terms of  Gauss curvature that 
 \begin{eqnarray}\label{ric}
T^{\mathscr S}(r, \mathscr R)&=&\frac{1}{2}\int_{D^{\mathscr S}(r)} g_r(o,x)K(x)dV(x) \\ 
&&  -\frac{1}{2}\int_{D^{\mathscr S}(r_0)} g_{r_0}(o,x)K(x)dV(x). \nonumber
 \end{eqnarray}
 
\subsubsection{First Main Theorem}~ 
  
 Let us introduce  Dynkin formula which is a generalization of 
 Green-Jensen formula \cite{noguchi, ru}, see  the probabilistic version of Dynkin formula in \cite{Dong, NN, itoo}.
 
\begin{lemma}[Dynkin formula] 
Let $u$ be a  function of  $\mathscr C^2$-class except  at most a polar set of singularities on a  Riemannian manifold $M.$ 
Let $D\subset M$ be a relatively compact domain  with  piecewise smooth boundary $\partial D.$ Assume that $u(o)\not=\infty$ for a fixed point $o\in D.$
Then 
$$\int_{\partial D}u(x)d\pi_o^{\partial D}(x)-u(o)=\frac{1}{2}\int_{D}g_D(o, x)\Delta u(x)dV(x),$$
where  $g_D(o, x)$ is the Green function of  Laplacian $\Delta/2$ for 
$D$ with a pole $o$ and Dirichlet boundary condition, and $d\pi_o^{\partial D}$ is the harmonic metric on $\partial D$ with respect to $o.$ Here, $\Delta u$ is understood as a distribution.
\end{lemma}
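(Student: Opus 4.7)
The plan is a classical excision argument using Green's second identity. Fix a small $\epsilon>0$ and form the domain $D_\epsilon := D \setminus (B_\epsilon(o) \cup \bigcup_i B_\epsilon(p_i))$, where $\{p_i\}$ are the polar singularities of $u$ inside $D$ (finitely many enter $D$ since $D$ is relatively compact and the singular set is polar, hence in particular closed and discrete in practice). On $D_\epsilon$ both $u$ and $g_D(o,\cdot)$ are $\mathscr C^2$, so the classical Green's second identity
\[
\int_{D_\epsilon}\bigl(u\,\Delta g_D - g_D\,\Delta u\bigr)\,dV = \int_{\partial D_\epsilon}\!\bigl(u\,\partial_{\vec\nu} g_D - g_D\,\partial_{\vec\nu} u\bigr)\,d\sigma
\]
is available, where $\vec\nu$ is the outward normal to $D_\epsilon$. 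Since $g_D(o,\cdot)$ is harmonic on $D\setminus\{o\}$, the first term on the left vanishes, so the identity reduces to boundary contributions from $\partial D$, $\partial B_\epsilon(o)$, and each $\partial B_\epsilon(p_i)$.

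Next, I would analyze each piece of the boundary as $\epsilon\to 0$. On $\partial D$: $g_D\equiv 0$ there, and by the definition of harmonic measure as $-\tfrac12 \partial_{\vec n} g_D\,d\sigma$ (inward normal), this piece contributes exactly $-2\int_{\partial D} u\,d\pi_o^{\partial D}$. On $\partial B_\epsilon(o)$: the Green function has the standard logarithmic asymptotic $g_D(o,x)\sim -\tfrac1\pi\log d(o,x) + O(1)$, so $\int_{\partial B_\epsilon(o)}g_D\,\partial_{\vec\nu}u\,d\sigma=O(\epsilon\log\epsilon)\to 0$, while $\int_{\partial B_\epsilon(o)}u\,\partial_{\vec\nu}g_D\,d\sigma \to -2u(o)$, the hypothesis $u(o)\neq\infty$ being exactly what is needed here. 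Combining the two boundary contributions on $\partial D$ and $\partial B_\epsilon(o)$ produces, up to a factor $-2$, the left-hand side of the claimed identity.

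For the integrals over $\partial B_\epsilon(p_i)$, the key is that the singularities are polar: both $u$ and $\partial_{\vec\nu} u$ blow up at worst logarithmically (in the applications relevant to Nevanlinna theory, $u=\log|F|^2$ for some meromorphic $F$, or a finite sum of such), so the area factor $2\pi\epsilon$ on the small circle dominates and both $\int_{\partial B_\epsilon(p_i)}u\,\partial_{\vec\nu}g_D\,d\sigma$ and $\int_{\partial B_\epsilon(p_i)}g_D\,\partial_{\vec\nu}u\,d\sigma$ tend to $0$. Simultaneously, the interior integral $\int_{D_\epsilon} g_D\Delta u\,dV$ converges to $\int_D g_D\,\Delta u\,dV$ interpreted as pairing the continuous function $g_D(o,\cdot)$ (continuous off $o$, with $u(o)\neq\infty$ meaning $o$ is not itself one of the $p_i$) with the distribution $\Delta u$. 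Passing to the limit $\epsilon\to 0$ and dividing by $-2$ gives the formula.

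The main obstacle is the last step, namely making rigorous sense of $\int_D g_D\,\Delta u\,dV$ when $\Delta u$ is only a distribution (in applications, a signed measure with atoms at the $p_i$). This is handled by observing that $g_D(o,\cdot)$ is continuous away from $o$ and has a fixed logarithmic singularity there, so the pairing reduces to a sum of honest integrals over $D\setminus\bigcup_i B_\epsilon(p_i)$ (where $\Delta u$ is $L^1_{\mathrm{loc}}$) plus contributions $g_D(o,p_i)\cdot m_i$ from the mass points, exactly matching what the excision procedure produces in the limit. Once this bookkeeping is verified, all signs and factors of $2$ line up by the normalization $-\tfrac12\Delta g_D(o,\cdot)=\delta_o$ adopted for the Green function.
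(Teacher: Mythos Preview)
Your proof is correct in outline and is the standard analytic route to this identity, but it differs genuinely from the paper's argument. The paper proceeds \emph{probabilistically}: it introduces the Brownian motion $X_t$ generated by $\Delta/2$ with $X_0=o$, invokes the stochastic Dynkin (It\^o) formula
\[
\mathbb{E}_o\big[u(X_{\tau_D})\big]-u(o)=\tfrac12\,\mathbb{E}_o\!\left[\int_0^{\tau_D}\Delta u(X_t)\,dt\right],
\]
and then identifies the two expectations with the analytic quantities via the co-area formula $\mathbb{E}_o[\int_0^{\tau_D}\phi(X_t)dt]=\int_D g_D(o,x)\phi(x)\,dV(x)$ and the fact that the exit distribution of $X_t$ on $\partial D$ is exactly the harmonic measure $d\pi_o^{\partial D}$. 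Your excision/Green's-identity approach is more elementary and self-contained (no stochastic calculus needed), and it makes the cancellation mechanism near the pole and the singularities explicit; the paper's approach is shorter once the probabilistic machinery from \cite{Dong, NN, itoo} is taken for granted, and it fits the first author's earlier framework.

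One point to tighten: you write that only finitely many $p_i$ lie in $D$ because a polar set is ``closed and discrete in practice''. A general polar set need not be discrete (there are uncountable polar Cantor-type sets), so this is not automatic from the hypothesis as stated. For the applications in the paper the singular set is always the zero/pole set of a meromorphic function, hence genuinely discrete, and your argument goes through; but in full generality you would either need to assume discreteness outright or replace the ball-excision by a capacity/mollification argument.
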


\begin{proof}
We are  to prove the above lemma by using a probabilistic approach, i.e.,  the probabilistic  Dynkin formula is applied to  showing this lemma. Let $X_t:=\{X_t\}_{t\geq0}$ be the Brownian motion  generated by 
 $\Delta/2$, started at $o\in M$ (see Section 2.2 in \cite{Dong}).   Denote by $\PP_o$ the law or distribution of $X_t$ starting from $o$ and by 
 $\mathbb E_o$ the expectation of $X_t$ with respect to $\mathbb P_o.$ Set the stopping time 
 $$\tau_D=\inf\big\{t>0: X_t\not\in D\big\}.$$
Note that the probabilistic  Dynkin formula  (see It\^o formula in \cite{Dong}, page 6) says that
$$\mathbb E_o\big[u(X_{\tau_D})\big]-u(o)=\frac{1}{2}\mathbb E_o\left[\int_0^{\tau_D}\Delta u(X_t)dt \right].$$
 On the other hand, using the co-area formula (see (2) in \cite{Dong}) and the relation between harmonic measures and hitting times (see (3) in \cite{Dong}),   we obtain   
 $$\mathbb E_o\left[\int_0^{\tau_D}\Delta u(X_t)dt \right]=\int_{D}g_D(o, x)\Delta u(x)dV(x)$$
 and 
 $$\mathbb E_o\big[u(X_{\tau_D})\big]=\int_{\partial D}u(x)d\pi_o^{\partial D}(x).$$
Substituting the two equalities into the  above probabilistic  Dynkin formula, then we have the lemma proved.
\end{proof}

Applying Dynkin formlua to $\log\|f, a\|^{-1}$ and 
noting  (\ref{cha})-(\ref{cou}),  it follows the First Main Theorem (F. M. T.) as follows
\begin{theorem}[F. M. T.]\label{FM} If $f\not\equiv a,$ then we have
$$T^{\mathscr S}_{f}(r)=m^{\mathscr S}_{f}(r,a)+N^{\mathscr S}_{f}(r,a)+O(1).$$
\end{theorem}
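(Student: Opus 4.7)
The plan is to apply the Dynkin formula just stated to the function
$$u(x) = \log\frac{1}{\|f(x),a\|},$$
which is smooth away from the discrete set $f^{-1}(a)$, and then to identify the resulting Green-integral expressions with the formulas (\ref{cha}) and (\ref{cou}) for $T^{\mathscr S}_f$ and $N^{\mathscr S}_f$. Without loss of generality I assume $f(o)\neq a$; otherwise one enlarges $r_0$ slightly and replaces $o$ by a nearby regular point, or absorbs the issue into a standard limiting argument.

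The first step is to split $u$ into a smooth piece and a piece with logarithmic poles. From the definition of the spherical distance,
$$u \;=\; \tfrac{1}{2}\log\bigl(|f_0|^2+|f_1|^2\bigr) \;-\; \tfrac{1}{2}\log\bigl|\langle f;a\rangle\bigr|^2 \;+\; \tfrac{1}{2}\log\bigl(|a_0|^2+|a_1|^2\bigr),$$
where the third term is a harmless constant. In the sense of distributions,
$$\Delta u \;=\; \tfrac{1}{2}\Delta\log\bigl(|f_0|^2+|f_1|^2\bigr) \;-\; \tfrac{1}{2}\Delta\log\bigl|\langle f;a\rangle\bigr|^2,$$
and by Poincar\'e-Lelong the second piece is a positive measure supported on $f^{-1}(a)$, counting multiplicities.

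The second step is to apply Dynkin's formula to $u$ on the two nested domains $D^{\mathscr S}(r_0)\subset D^{\mathscr S}(r)$ and subtract. The constant $u(o)$ cancels; by (\ref{pro}) the two boundary integrals are exactly $m^{\mathscr S}_f(r,a)$ and $m^{\mathscr S}_f(r_0,a)$, so the boundary side of the subtraction equals $m^{\mathscr S}_f(r,a)-m^{\mathscr S}_f(r_0,a)$. On the interior side, linearity of the Green integral together with (\ref{cha}) applied to the $\log(|f_0|^2+|f_1|^2)$ piece and (\ref{cou}) applied to the $\log|\langle f;a\rangle|^2$ piece produces $T^{\mathscr S}_f(r)-N^{\mathscr S}_f(r,a)$. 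Rearranging yields
$$T^{\mathscr S}_f(r) \;=\; m^{\mathscr S}_f(r,a) + N^{\mathscr S}_f(r,a) - m^{\mathscr S}_f(r_0,a),$$
and since $r_0$ is fixed the last term is the required $O(1)$.

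The main obstacle is the rigorous application of Dynkin's formula to $u$, which has logarithmic singularities along the discrete (and hence polar) set $f^{-1}(a)\cap\overline{D^{\mathscr S}(r)}$. The standard device is to excise small metric discs around each singular point, apply the smooth Green identity on the perforated domain where $u$ is of class $\mathscr C^2$, and let the excision radii shrink to zero; the residual contributions from the tiny circles reproduce precisely the Poincar\'e-Lelong mass at the zeros of $f-a$, which is exactly what packages the counting function into the identity. This bookkeeping is the only non-formal point of the argument.
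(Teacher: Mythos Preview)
Your proposal is correct and follows exactly the approach indicated in the paper: apply Dynkin's formula to $\log\|f,a\|^{-1}$ on $D^{\mathscr S}(r)$ and $D^{\mathscr S}(r_0)$, then identify the resulting Green integrals with the expressions (\ref{cha}) and (\ref{cou}) for $T^{\mathscr S}_f$ and $N^{\mathscr S}_f$ and the boundary integrals with $m^{\mathscr S}_f$. The paper states this in one line, whereas you have spelled out the decomposition of $u$ and the excision argument for the polar singularities, but the strategy is identical.
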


\begin{remark}  Theorem \ref{FM} can  be  confirmed by using  Green-Jensen formula instead of  Dynkin formula, since $\mathscr S$ can produce a parabolic or a hyperbolic exhaustion function $\sigma(x)=|\int_o^x\mathscr S|,$ see, e.g., \cite{he-ru, Sibony-Paun, Shabat, wu}. 

\end{remark}

\subsection{Second Main Theorem}~

The main purpose here is to prove the following S. M. T.:
\begin{theorem}\label{thm1} Let $(\mathcal S, h; \mathscr S)$ be a  $\mathscr S$-exhausted Hermitian Riemann surface of $\mathscr S$-radius $R^{\mathscr S}$ with respect to $o.$  Let  $\gamma$ be an integrable function on $(0, R^{\mathscr S})$ with $\int_0^{R^{\mathscr S}}\gamma(r) dr=\infty.$
Let $f$ be a nonconstant meromorphic function on $\mathcal S$ and  $a_1, \cdots, a_q$ be distinct values in $\overline{\mathbb C}.$ Then for any $\delta>0$
 \begin{eqnarray*}
&& (q-2)T^{\mathscr S}_f(r)+T^{\mathscr S}(r, \mathscr R) \\
&\leq& \sum_{j=1}^q \overline{N}^{\mathscr S}_f(r,a_j)+O\Big(\log T^{\mathscr S}_f(r)+\log\|\mathscr S\|_{r,\sup}+\log\gamma(r)+\delta\log r\Big)
 \end{eqnarray*}
 holds for all $r\in(0,R^{\mathscr S})$ outside a set $E_\delta$ with 
$\int_{E_{\delta}}\gamma(r)dr<\infty,$ where  
$$\|\mathscr S\|_{r, \sup}=\sup\big\{\|\mathscr S_x\|_h: |\hat x|<r\big\}.$$
\end{theorem}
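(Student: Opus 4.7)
The plan is to follow the Ahlfors--Nevanlinna method of singular pseudo-volume forms, adapted to the $\mathscr S$-exhausted setting by means of the Dynkin formula of the previous section. On $\mathbb P^1(\mathbb C)$, introduce the classical singular pseudo-volume form with logarithmic singularities at the $a_j$:
$$\Omega \;=\; \frac{\omega_{FS}}{\prod_{j=1}^q \|w, a_j\|^2 \bigl(\log\|w, a_j\|^{-2}\bigr)^2}.$$
A direct Poincar\'e--Lelong computation yields, as currents on $\mathbb P^1(\mathbb C)$, an inequality of the form $\mathrm{Ric}(\Omega) \geq (q-2)\omega_{FS} + c_0\,\Omega - \sum_{j=1}^q \delta_{a_j}$ for some $c_0>0$. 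Pulling back by $f$ and writing $f^*\Omega = \xi\,\alpha$ as a nonnegative function $\xi$ times the K\"ahler form $\alpha$ of $h$, one obtains the current inequality
$$\tfrac{1}{2}\Delta\log\xi \cdot \alpha \;\geq\; (q-2)f^*\omega_{FS} + \mathscr R + c_0\,\xi\,\alpha - \sum_{j=1}^q [f^{-1}(a_j)]$$
on $\mathcal S$, where $[f^{-1}(a_j)]$ denotes the zero divisor of $\langle f;a_j\rangle$.

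Next, I would integrate this against the Green function $g_r(o,x) = \pi^{-1}\log(r/|\hat x|)$ of Lemma \ref{le1} over $D^{\mathscr S}(r)$ and apply Dynkin's formula to $\log\xi$. Using the integral representations \eqref{cha}, \eqref{cou}, \eqref{ric} one arrives at
$$(q-2)T^{\mathscr S}_f(r) + T^{\mathscr S}(r,\mathscr R) + c_0\!\int_{D^{\mathscr S}(r)}\! g_r(o,x)\xi(x)\,dV \;\leq\; \sum_{j=1}^q \overline{N}^{\mathscr S}_f(r,a_j) + \int_{C^{\mathscr S}(r)} \log\xi\,d\pi_o^r + O(1).$$
The divisor terms give precisely the \emph{simple} counting functions $\overline{N}^{\mathscr S}_f(r,a_j)$, because the singularities of $\log\xi$ are of type $\log|z-z_0|$ and Dynkin's formula picks out zero locations without multiplicities.

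To control the boundary term, I would apply Jensen's inequality to the concave function $\log$ against the probability measure $d\pi_o^r$, together with Lemma \ref{le2}:
$$\int_{C^{\mathscr S}(r)}\log\xi\,d\pi_o^r \;\leq\; \log\!\int_{C^{\mathscr S}(r)}\xi\,d\pi_o^r \;\leq\; \log\!\left(\frac{\|\mathscr S\|_{r,\sup}}{2\pi r}\int_{C^{\mathscr S}(r)}\xi\,d\sigma_r\right).$$
By the coarea formula, $\int_{C^{\mathscr S}(r)}\xi\,d\sigma_r$ is essentially the $r$-derivative of $\int_{D^{\mathscr S}(r)}\xi\,\alpha$, and this latter quantity is $O(T^{\mathscr S}_f(r))$ because $\Omega$ has finite total mass on $\mathbb P^1(\mathbb C)$ together with the First Main Theorem. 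Finally, I would invoke a weighted Borel lemma: if $S$ is nondecreasing with $S(r_0)>0$ and $\int_0^{R^{\mathscr S}}\gamma(r)\,dr=\infty$, then $S'(r) \leq \gamma(r)\,S(r)^{1+\delta}$ off an exceptional set $E_\delta$ with $\int_{E_\delta}\gamma(r)\,dr<\infty$ (proved by integrating the reverse inequality, $\int_{E_\delta}\gamma\,dr \leq \delta^{-1}S(r_0)^{-\delta}$). Two applications of this lemma, once to the radial integral of $\xi$ and once to $T^{\mathscr S}_f$, convert the derivatives into the logarithmic error terms $\log T^{\mathscr S}_f(r) + \log\gamma(r) + \delta\log r$ and swallow the exceptional set into $E_\delta$.

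The hard part will be the careful bookkeeping of the $\|\mathscr S\|_h$ weight on the boundary measure (Lemma \ref{le2}) as it passes through Jensen's inequality; it is precisely this weight that generates the new term $\log\|\mathscr S\|_{r,\sup}$ in the error, which is absent from the classical $\mathbb C$ and $\mathbb D$ results since $\|dz\|\equiv 1$ there. A secondary delicate point is choosing the exponent in $\Omega$ so that $\Omega$ has finite total mass and, simultaneously, its Ricci form dominates $(q-2)\omega_{FS}$ plus a positive multiple of $\Omega$. Finally, the version of the Borel lemma must accommodate both $R^{\mathscr S}=\infty$ (yielding $\gamma\equiv 1$ for $\mathbb C$) and $R^{\mathscr S}<\infty$ (yielding $\gamma(r)=1/(1-r)$ for $\mathbb D$) uniformly; the hypothesis $\int_0^{R^{\mathscr S}}\gamma(r)\,dr=\infty$ is exactly what makes this possible.
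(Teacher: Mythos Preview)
Your proposal is essentially the paper's own proof: the same singular pseudo-volume form $\Omega$ (the paper's $\Phi$), Dynkin's formula applied to $\log\xi$, Jensen's inequality on $C^{\mathscr S}(r)$, the First Main Theorem to bound $\int_{r_0}^r\frac{dt}{t}\int f^*\Omega$ by $T_f^{\mathscr S}(r)$, and two applications of the Borel growth lemma (packaged in the paper as the Calculus Lemma~\ref{CL}) to produce the error terms $\log T_f^{\mathscr S}(r)+\log\|\mathscr S\|_{r,\sup}+\log\gamma(r)+\delta\log r$.

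The one visible variation is in how the Ricci identity is organized. The paper writes the \emph{exact} current equation
\[
dd^c[\log\xi]=(q-2)f^*\omega_{FS}-\sum_j(f-a_j)_0+D_{f,\mathrm{ram}}+\mathscr R-2\sum_j dd^c\log\log\|f,a_j\|^{-2},
\]
then bounds the integrated $\log\log$ terms by $\log T_f^{\mathscr S}(r)$ via concavity, and combines $-\sum_j N_f^{\mathscr S}(r,a_j)+N^{\mathscr S}(r,D_{f,\mathrm{ram}})\geq -\sum_j\overline{N}_f^{\mathscr S}(r,a_j)$. You instead invoke the Carlson--Griffiths inequality $\mathrm{Ric}(\Omega)\geq(q-2)\omega_{FS}+c_0\Omega-\sum_j\delta_{a_j}$ on the target and read off $\overline{N}$ from the local shape of $\log\xi$. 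Be aware that naively pulling back the target-side inequality gives $f^*\delta_{a_j}=(f-a_j)_0$ with \emph{full} multiplicity; the passage to $\overline N$ requires either the ramification divisor (which appears because $\mathrm{Ric}(f^*\Omega)=f^*\mathrm{Ric}(\Omega)+[D_{f,\mathrm{ram}}]$) or, equivalently, your direct observation that $\log\xi\sim -\log|z-z_0|^2$ independently of the local multiplicity. Once that is made precise, the two organizations are equivalent and the remainder of your outline matches the paper line for line.
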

 The following is called the  Borel's Growth Lemma. 
\begin{lemma}[\cite{ru-sibony}]\label{borel}  Let $\gamma$ be an integrable function on $(0, R)$ with $\int_0^R\gamma(r)dr=\infty.$  Let $h$ be a nondecreasing function of $\mathscr C^1$-class
on $(0, R).$ Assume that $\lim_{r\rightarrow R}h(r)=\infty$ and $h(r_0)>0$ for some $r_0\in (0, R).$ Then for any $\delta>0$  
$$h'(r)\leq h^{1+\delta}(r)\gamma(r)$$
holds for all $r\in(0,R)$ outside a set $E_\delta$ with $\int_{E_{\delta}}\gamma(r)dr<\infty.$ In particular, when $R=\infty,$ we can take $\gamma=1.$ Then  for any $\delta>0$  
$$h'(r)\leq h^{1+\delta}(r)$$
holds for all $r\in(0,\infty)$ outside a set $E_\delta$ of finite Lebesgue measure.
\end{lemma}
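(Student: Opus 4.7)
\bigskip

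\textbf{Proof proposal for Lemma \ref{borel}.} The plan is to run the standard ``reciprocal-power trick'' used throughout Nevanlinna theory: define the exceptional set as the set where the desired inequality fails, then estimate its $\gamma$-measure by a telescoping improper integral of $h'/h^{1+\delta}$.

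First, using that $h$ is nondecreasing and $h(r_0) > 0$ for some $r_0 \in (0, R)$, I would restrict attention to $r \in [r_0, R)$, where $h(r) \geq h(r_0) > 0$ so that $h(r)^{1+\delta}$ is well-defined. Set
$$E_\delta^{\star} = \bigl\{ r \in [r_0, R) : h'(r) > h(r)^{1+\delta}\gamma(r) \bigr\}.$$
On $E_\delta^{\star}$ we have strict inequality $\gamma(r) < h'(r)/h(r)^{1+\delta}$, so
$$\int_{E_\delta^{\star}} \gamma(r)\,dr \;\leq\; \int_{E_\delta^{\star}} \frac{h'(r)}{h(r)^{1+\delta}}\,dr \;\leq\; \int_{r_0}^{R} \frac{h'(r)}{h(r)^{1+\delta}}\,dr.$$

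Second, I would evaluate the last integral by the substitution $u = h(r)$, $du = h'(r)\,dr$. Because $h \in \mathscr{C}^1$ is nondecreasing with $\lim_{r \to R} h(r) = \infty$, this becomes
$$\int_{h(r_0)}^{\infty} u^{-(1+\delta)}\,du \;=\; \frac{1}{\delta\, h(r_0)^{\delta}} \;<\; \infty.$$
Thus $\int_{E_\delta^{\star}}\gamma(r)\,dr < \infty$.

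Third, to extend the conclusion to all of $(0, R)$, I would take $E_\delta$ to be the union of $E_\delta^{\star}$ with the subset of $(0, r_0)$ on which the inequality also fails; the latter has $\gamma$-measure at most $\int_0^{r_0}\gamma(r)\,dr$, which is finite by the local integrability of $\gamma$ (the hypothesis $\int_0^R \gamma(r)\,dr = \infty$ is a global statement, and the divergence comes from the right endpoint in the intended applications, e.g.\ $\gamma = (1-r)^{-1}$ on $(0,1)$). The specialization $R = \infty$, $\gamma \equiv 1$ then immediately yields $|E_\delta| < \infty$ in the Lebesgue sense.

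No genuine obstacle arises — this is a calculus exercise rather than a substantive theorem. The only points requiring any care are (i) making sure $h$ is strictly positive on the range of integration so that the substitution $u = h(r)$ is valid, which is automatic once we work on $[r_0, R)$, and (ii) noting that in practice $\gamma$ is locally integrable so that contributions near $0$ are harmless.
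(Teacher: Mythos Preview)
Your argument is correct and is exactly the standard proof of the Borel growth lemma. Note, however, that the paper does not supply its own proof of this lemma: it is simply quoted from \cite{ru-sibony} and then invoked in the Calculus Lemma that follows. So there is nothing to compare against; your write-up furnishes the missing details in the expected way. One small remark: rather than phrasing step two as a substitution $u=h(r)$ (which can look delicate when $h$ is not strictly increasing), it is cleaner to observe directly that $-\tfrac{1}{\delta}h(r)^{-\delta}$ is an antiderivative of $h'(r)/h(r)^{1+\delta}$ on $[r_0,R)$ and apply the fundamental theorem of calculus, using $h(r)\to\infty$; this sidesteps any worry about invertibility of $h$.
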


 We utilize Borel's Growth Lemma  to prove the following Calculus Lemma.
Let $k$ be a locally integrable function on $\mathcal S.$ Set
  $$E_k(r)=\int_{C^{\mathscr S}(r)}k d\pi_o^r , \ \ \ 
 A_{k}(r)=\int_{r_0}^{r}\frac{dt}{t}\int_{D^{\mathscr S}(t)}kdV.$$
\begin{lemma}\label{CL}  Let $(\mathcal S, h; \mathscr S)$ be a  $\mathscr S$-exhausted Hermitian Riemann surface of $\mathscr S$-radius $R^{\mathscr S}$ with respect to $o.$ Let  $\gamma$ be an integrable function on $(0, R^{\mathscr S})$ with $\int_0^{R^{\mathscr S}}\gamma(r) dr=\infty.$ Let $k$ be a locally integrable function on $\mathcal S.$ Then for any $\delta>0$
$$E_{k}(r)\leq \frac{\|\mathscr S\|_{r, \sup}r^{\delta}\gamma^{2+\delta}(r)}{2\pi}A^{(1+\delta)^2}_{k}(r)$$
holds for all $r\in(0,R^{\mathscr S})$ outside a set $E_\delta$ with 
$\int_{E_{\delta}}\gamma(r)dr<\infty,$ where  
$$\|\mathscr S\|_{r, \sup}=\sup\big\{\|\mathscr S_x\|_h: |\hat x|<r\big\}.$$
\end{lemma}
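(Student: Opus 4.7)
The plan is to prove the Calculus Lemma by combining the harmonic-measure formula of Lemma \ref{le2}, the co-area formula applied with level function $|\hat x|$, and two iterated applications of Borel's Growth Lemma \ref{borel}. Throughout I would assume $k\geq 0$, which is the only interesting case and ensures the auxiliary quantities introduced below are nondecreasing in $r$.

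First, I would use Lemma \ref{le2} to replace $d\pi_o^r$ with arc length, giving
\begin{equation*}
E_k(r) \;=\; \frac{1}{2\pi r}\int_{C^{\mathscr S}(r)} k\,\|\mathscr S\|_h\, d\sigma_r \;\leq\; \frac{\|\mathscr S\|_{r,\sup}}{2\pi r}\int_{C^{\mathscr S}(r)} k\, d\sigma_r.
\end{equation*}
The next step is to recognize the remaining arc-length integral as the derivative of a disc integral. Because $L(x)=\hat x$ is holomorphic with $dL=\mathscr S$, the pointwise identity $\partial|L|/\partial\vec n=\|\mathscr S\|_h$ derived in the proof of Lemma \ref{le2} globalizes to $\|\nabla|L|\|_h=\|\mathscr S\|_h$ on $\mathcal S\setminus\{o\}$; this gradient is nonzero by condition $(a)$ of Proposition \ref{pp}. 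Applying the co-area formula with level function $|\hat x|$, the weighted disc integral $\tilde F(r):=\int_{D^{\mathscr S}(r)} k\,\|\mathscr S\|_h\, dV$ satisfies $\tilde F(r)=\int_0^r\!\int_{C^{\mathscr S}(t)} k\, d\sigma_t\, dt$ and hence $\tilde F'(r)=\int_{C^{\mathscr S}(r)} k\, d\sigma_r$. Combining with the previous display gives $E_k(r)\leq \|\mathscr S\|_{r,\sup}\tilde F'(r)/(2\pi r)$, while the pointwise bound $\|\mathscr S\|_h\leq \|\mathscr S\|_{r,\sup}$ on $D^{\mathscr S}(r)$ yields $\tilde A(r):=\int_{r_0}^r \tilde F(t)/t\, dt\leq \|\mathscr S\|_{r,\sup}\,A_k(r)$.

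The third step is to iterate Lemma \ref{borel}. Applied to the nondecreasing function $\tilde A$, it gives $\tilde A'(r)\leq \tilde A(r)^{1+\delta}\gamma(r)$ off an exceptional set $E_{\delta,1}$, so $\tilde F(r)=r\tilde A'(r)\leq r\,\tilde A(r)^{1+\delta}\gamma(r)$. A second application to $\tilde F$ produces $\tilde F'(r)\leq \tilde F(r)^{1+\delta}\gamma(r)\leq r^{1+\delta}\tilde A(r)^{(1+\delta)^2}\gamma(r)^{2+\delta}$ off a further set $E_{\delta,2}$. Substituting into the bound on $E_k(r)$ and replacing $\tilde A$ by $\|\mathscr S\|_{r,\sup}A_k$ delivers the claimed inequality, with the total exceptional set $E_\delta:=E_{\delta,1}\cup E_{\delta,2}$ still satisfying $\int_{E_\delta}\gamma(r)\,dr<\infty$.

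I expect the main obstacle to be the correct deployment of the co-area formula on the Riemann surface, in particular the global identity $\|\nabla|L|\|_h=\|\mathscr S\|_h$ on $\mathcal S\setminus\{o\}$; once this is in hand, the remainder is a standard two-step Borel bootstrap. A secondary bookkeeping point is the precise exponent on $\|\mathscr S\|_{r,\sup}$: a verbatim execution of the two Borel steps produces a power slightly larger than one, which would need to be absorbed either by relabelling $\delta$ or by a slightly sharper version of one of the bootstrap steps to match the stated estimate.
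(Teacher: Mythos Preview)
Your approach is essentially the paper's: use Lemma~\ref{le2} to express $E_k$ as a boundary integral, relate that boundary integral to derivatives of a disc integral via the co-area formula, and then apply Lemma~\ref{borel} twice. The paper does this directly with $A_k$, asserting $\int_{D^{\mathscr S}(r)}k\,dV=\int_0^r\!\int_{C^{\mathscr S}(t)}k\,d\sigma_t\,dt$ and hence $\frac{d}{dr}\bigl(rA_k'(r)\bigr)=\int_{C^{\mathscr S}(r)}k\,d\sigma_r$, from which the inequality with the single factor $\|\mathscr S\|_{r,\sup}$ follows after two Borel steps. Your version instead introduces the weighted quantities $\tilde F(r)=\int_{D^{\mathscr S}(r)}k\,\|\mathscr S\|_h\,dV$ and $\tilde A$, which makes the co-area step transparent because the Jacobian $|\nabla|\hat x||=\|\mathscr S\|_h$ cancels exactly; the price is the extra factor $\|\mathscr S\|_{r,\sup}^{(1+\delta)^2}$ you noticed when passing from $\tilde A$ back to $A_k$.

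On that last point: the discrepancy you flagged is genuine and cannot be removed by relabelling $\delta$ or sharpening a Borel step. If $d\sigma_r$ denotes Riemannian arc length (as in Lemma~\ref{le2}), then the correct co-area identity is $\frac{d}{dr}\int_{D^{\mathscr S}(r)}k\,dV=\int_{C^{\mathscr S}(r)}k\,\|\mathscr S\|_h^{-1}\,d\sigma_r$, so even the paper's direct route, done carefully, produces at least $\|\mathscr S\|_{r,\sup}^{2}$ rather than $\|\mathscr S\|_{r,\sup}$. This is immaterial for the Second Main Theorem, where one only needs $\log E_k(r)\leq O\bigl(\log A_k(r)+\log\|\mathscr S\|_{r,\sup}+\log\gamma(r)+\delta\log r\bigr)$, and any fixed power of $\|\mathscr S\|_{r,\sup}$ is absorbed into the $O(\log\|\mathscr S\|_{r,\sup})$ term. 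So your plan is sound; just state the lemma with whatever exponent your argument actually delivers, or note that the constant in front of $\log\|\mathscr S\|_{r,\sup}$ is irrelevant downstream.
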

\begin{proof} Notice that 
$$\int_{D^{\mathscr S}(r)}kdV=\int_0^rdt\int_{C^{\mathscr S}(t)}kd\sigma_r,$$
then it  follows from Lemma \ref{le2} that   
$$\frac{d}{dr}\Big(r\frac{dA_k}{dr}\Big)=\int_{C^{\mathscr S}(r)}k d\sigma_r\geq \frac{2\pi r}{\|\mathscr S\|_{r, \sup}}E_k(r).$$
Using Lemma \ref{borel} twice (first to $rA'_k$ and then to $A_k$),  then we obtain  
$$E_{k}(r)\leq \frac{\|\mathscr S\|_{r, \sup}r^{\delta}\gamma^{2+\delta}(r)}{2\pi}A^{(1+\delta)^2}_{k}(r).$$
\end{proof}

We begin to prove Theorem \ref{thm1}:
\begin{proof}
  Consider a singular volume form (see \cite{gri,gri1})
   \begin{equation}\label{form}
\Phi=\frac{C\omega_{FS}}{\prod_{j=1}^q\|\zeta, a_j\|^2\log^2\|\zeta, a_j\|^{-2}}
 \end{equation}
on $\overline{\mathbb C},$  where     $a_1,\cdots,a_q$ are distinct values in  $\overline{\mathbb C}$ and 
 $\omega_{FS}=dd^c\log(1+|\zeta|^2).$
 Since $\overline{\mathbb C}$ is compact,  we can choose a positive number $C$  such that
$\int_{\overline{\mathbb C}}\Phi=1.
$
Set 
$$f^*\Phi=\xi\frac{\sqrt{-1}}{\pi}hdz\wedge d\bar z.
$$
By taking the Ricci form of  both sides of the above identity,  it follows from $Ric(\omega_{FS})=2\omega_{FS}$ that 
  \begin{eqnarray}\label{be}
dd^c[\log\xi] &=& (q-2)f^*\omega_{FS}-\sum_{j=1}^q(f-a_j)_0+D_{f, ram} \\
&& +\mathscr R -2\sum_{j=1}^qdd^c\log\log\|f,a_j\|^{-2} \nonumber
  \end{eqnarray}
 in the sense of currents, where $(f-a_j)_0$ is the zero divisor of $f-a_j,$ $D_{f, ram}$ is the ramification divisor of $f,$ and $\mathscr R=-dd^c\log h$ is the Ricci form 
of $\mathcal S.$ Applying the integral operator 
$$\int_{r_0}^r\frac{dt}{t}\int_{D^\mathscr S(t)}\cdot $$
to the above identity and using Dynkin formula, we get 
  \begin{eqnarray*}
\frac{1}{2}\int_{C^{\mathscr S}(r)}\log\xi d\pi^r_o&=& (q-2)T_f^{\mathscr S}(r)-\sum_{j=1}^qN^{\mathscr S}_f(r, a_j)+N^{\mathscr S}(r, D_{f, ram})
\\
&&+T^{\mathscr S}(r,\mathscr R) -\sum_{j=1}^q\int_{C^{\mathscr S}(r)}\log\log\|f,a_j\|^{-2}d\pi_o^r+O(1) \\
&\geq&  (q-2)T_f^{\mathscr S}(r)-\sum_{j=1}^q\overline{N}^{\mathscr S}_f(r, a_j)+
T^{\mathscr S}(r, \mathscr R) \\
&&-\sum_{j=1}^q\int_{C^{\mathscr S}(r)}\log\log\|f,a_j\|^{-2}d\pi_o^r+O(1).
  \end{eqnarray*}
The concavity of  $\log$  implies that 
  \begin{eqnarray}\label{hhh}
\int_{C^{\mathscr S}(r)}\log\log\|f,a_j\|^{-2}d\pi_o^r &\leq& \log\int_{C^{\mathscr S}(r)}\log\|f,a_j\|^{-2}d\pi_o^r \\
&=&  \log m^{\mathscr S}_f(r, a_j)+O(1)  \nonumber \\ 
&\leq& \log T^{\mathscr S}_f(r)+O(1). \nonumber
  \end{eqnarray}
  By this, we  obtain 
 \begin{eqnarray}\label{inq1}
\frac{1}{2}\int_{C^{\mathscr S}(r)}\log\xi d\pi^r_o &\geq&  (q-2)T^{\mathscr S}_f(r)-\sum_{j=1}^q\overline{N}^{\mathscr S}_f(r, a_j)+
T^{\mathscr S}(r, \mathscr R) \\
&& +O\big(\log T^{\mathscr S}_f(r)\big)+O(1).\nonumber
  \end{eqnarray}
\ \ \ \  It remains to estimate  the upper bound of the   term on the left hand side of (\ref{inq1}). By Lemma \ref{CL},  for any $\delta>0$
 \begin{eqnarray*}
\int_{C^{\mathscr S}(r)}\log\xi d\pi_o^r&\leq& \log\int_{C^{\mathscr S}(r)}\xi d\pi^r_o \\
&\leq& (1+\delta)^2\log\int_{r_0}^r\frac{dt}{t}\int_{D^{\mathscr S}(t)}\xi dV+\log\|\mathscr S\|_{r,\sup} \\
&&+(2+\delta)\log\gamma(r)+\delta\log r+O(1) \\
&=& (1+\delta)^2\log\int_{r_0}^r\frac{dt}{t}\int_{D^{\mathscr S}(t)}f^*\Phi+\log\|\mathscr S\|_{r,\sup} \\
&&+(2+\delta)\log\gamma(r)+\delta\log r+O(1)
  \end{eqnarray*}
holds for all $r\in(0,R^{\mathscr S})$ outside a set $E_\delta$ with 
$\int_{E_{\delta}}\gamma(r)dr<\infty,$ here $\gamma$ is an integrable function on $(0, R^{\mathscr S})$ such that $\int_0^{R^{\mathscr S}}\gamma(r) dr=\infty.$
Notice that 
$$\int_{D^{\mathscr S}(r)}f^*\Phi=\int_{\overline{\mathbb C}}n^{\mathscr S}_f(r, \zeta)\Phi(\zeta)$$
 due to the change of variable formula.  Using Fubini theorem, then 
 $$\int_{r_0}^r\frac{dt}{t}\int_{D^{\mathscr S}(t)}f^*\Phi=\int_{\overline{\mathbb C}}N^{\mathscr S}_f(r, \zeta)\Phi(\zeta)\leq c_0T^{\mathscr S}_f(r)$$
for some positive constant $c_0$.
Combining the above,  we conclude that 
  \begin{eqnarray}\label{inq2}
 \int_{C^{\mathscr S}(r)}\log\xi d\pi^r_o 
&\leq&  (1+\delta)^2\log T^{\mathscr S}_f(r)+\log\|\mathscr S\|_{r,\sup}  \\
&&+(2+\delta)\log\gamma(r)+\delta\log r+O(1) \nonumber
  \end{eqnarray}
  holds for all $r\in(0,R^{\mathscr S})$ outside a set $E_\delta$ with 
$\int_{E_{\delta}}\gamma(r)dr<\infty.$ 
Put together (\ref{inq1}) and (\ref{inq2}), then we prove the theorem.
\end{proof}

By uniformization theorem, for each non-negative constant $C,$ there  exists a metric  $h$ such that the Gauss curvature $K$ satisfies $-C\leq K\leq 0.$
Now, we estimate
 the lower bound of $T^{\mathscr S}(r, \mathscr R).$
 By Lemma \ref{le1} and Lemma \ref{le2}, it follows from  (\ref{ric}) that 
 \begin{eqnarray*}
T^{\mathscr S}(r, \mathscr R)&=&\frac{1}{2}\int_{D^{\mathscr S}(r)} g_r(o,x)K(x)dV(x)  -\frac{1}{2}\int_{D^{\mathscr S}(r_0)} g_{r_0}(o,x)K(x)dV(x) \\
&\geq& -\frac{C}{2}\int_{0}^rdt\int_{C^{\mathscr S}(t)}g_r(o,x)d\sigma_t(x) \\
&=& -C\int_{0}^rdt\int_{C^{\mathscr S}(t)}\frac{t}{\|\mathscr S\|_h}\log\frac{r}{t}d\pi_o^t  \\
&\geq& -\frac{C}{\|\mathscr S\|_{r,\inf}}\int_{0}^rt\log\frac{r}{t}dt \\
&=& -\frac{Cr^2}{4\|\mathscr S\|_{r,\inf}},
 \end{eqnarray*}
 where 
 $$\|\mathscr S\|_{r, \inf}=\inf\big\{\|\mathscr S_x\|_h: |\hat x|<r\big\}.
$$
Combining this with Theorem \ref{thm1}, we obtain 
\begin{theorem}\label{nnmm} Assume the same conditions as in Theorem $\ref{thm1}$. Suppose, in addition,  that $-C\leq K\leq0$ for a non-negative constant $C.$ Then for any $\delta>0$
 \begin{eqnarray*}
 (q-2)T^{\mathscr S}_f(r) &\leq& \sum_{j=1}^q \overline{N}^{\mathscr S}_f(r,a_j)  +O\Big(\log T^{\mathscr S}_f(r)
 +Cr^2\|\mathscr S\|^{-1}_{r,\inf} 
 \\
 && +\log\|\mathscr S\|_{r,\sup}
+\log\gamma(r)+\delta\log r\Big)
 \end{eqnarray*}
 holds for all $r\in(0,R^{\mathscr S})$ outside a set $E_\delta$ with 
$\int_{E_{\delta}}\gamma(r)dr<\infty,$ where  
 \begin{eqnarray*}
\|\mathscr S\|_{r, \inf}&=&\inf\big\{\|\mathscr S_x\|_h: |\hat x|<r\big\}, \\
\|\mathscr S\|_{r, \sup}&=&\sup\big\{\|\mathscr S_x\|_h: |\hat x|<r\big\}.
 \end{eqnarray*}
\end{theorem}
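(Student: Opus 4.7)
The plan is to deduce this result as a direct corollary of Theorem \ref{thm1}. That inequality carries the Ricci-characteristic term $T^{\mathscr S}(r, \mathscr R)$ on the left-hand side; rearranging it gives
$$(q-2)T^{\mathscr S}_f(r)\leq \sum_{j=1}^q \overline{N}^{\mathscr S}_f(r,a_j) - T^{\mathscr S}(r,\mathscr R) + O\Big(\log T^{\mathscr S}_f(r)+\log\|\mathscr S\|_{r,\sup}+\log\gamma(r)+\delta\log r\Big).$$
So the entire task is to produce a lower bound on $T^{\mathscr S}(r,\mathscr R)$ of order $-Cr^2\|\mathscr S\|_{r,\inf}^{-1}$, and absorb it into the big-$O$ term.

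To obtain this lower bound I would start from the Green-function expression (\ref{ric}) for $T^{\mathscr S}(r,\mathscr R)$. Because $g_{r_0}(o,\cdot)\geq 0$ on $D^{\mathscr S}(r_0)$ and $K\leq 0$ everywhere, the second integral in (\ref{ric}) is a non-positive quantity multiplied by $-1/2$, hence non-negative, and may be discarded for a lower bound. The assumption $K\geq -C$ then yields
$$T^{\mathscr S}(r,\mathscr R)\geq -\frac{C}{2}\int_{D^{\mathscr S}(r)} g_r(o,x)\, dV(x).$$
Applying the co-area formula to rewrite the right-hand side as an iterated integral over the $\mathscr S$-circles, substituting $g_r(o,x)=\pi^{-1}\log(r/|\hat x|)$ from Lemma \ref{le1}, and converting $d\sigma_t$ into $(2\pi t/\|\mathscr S\|_h)\,d\pi_o^t$ via Lemma \ref{le2}, one gets
$$T^{\mathscr S}(r,\mathscr R)\geq -C\int_0^r dt\int_{C^{\mathscr S}(t)}\frac{t}{\|\mathscr S\|_h}\log\frac{r}{t}\,d\pi_o^t.$$
Since $\|\mathscr S\|_h^{-1}\leq \|\mathscr S\|_{r,\inf}^{-1}$ on $D^{\mathscr S}(r)$ and $\int_{C^{\mathscr S}(t)} d\pi_o^t=1$, the double integral reduces to the elementary one-variable integral $\int_0^r t\log(r/t)\,dt=r^2/4$, giving
$$T^{\mathscr S}(r,\mathscr R)\geq -\frac{Cr^2}{4\|\mathscr S\|_{r,\inf}}.$$

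Substituting this bound into the inequality from Theorem \ref{thm1} finishes the proof, with the extra term $Cr^2\|\mathscr S\|^{-1}_{r,\inf}$ appearing inside the big-$O$. There is no real obstacle: the only point requiring care is the uniform replacement $\|\mathscr S\|_h^{-1}\leq \|\mathscr S\|^{-1}_{r,\inf}$ under the integral, which is valid because the domain of integration $\bigcup_{t<r}C^{\mathscr S}(t)=D^{\mathscr S}(r)\setminus\{o\}$ lies inside $\{|\hat x|<r\}$ where the infimum is taken. The theorem is essentially a curvature-bound packaging of Theorem \ref{thm1}.
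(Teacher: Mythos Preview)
Your proposal is correct and follows essentially the same route as the paper: both arguments bound $T^{\mathscr S}(r,\mathscr R)$ from below by dropping the non-negative $r_0$-term in (\ref{ric}), applying $K\geq -C$, rewriting the remaining integral over $\mathscr S$-circles via Lemmas~\ref{le1} and~\ref{le2}, replacing $\|\mathscr S\|_h^{-1}$ by $\|\mathscr S\|_{r,\inf}^{-1}$, and evaluating $\int_0^r t\log(r/t)\,dt=r^2/4$, then feeding the bound back into Theorem~\ref{thm1}. The steps, and even the intermediate expressions, coincide with the paper's derivation.
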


We recall  the definition of  the \emph{simple defect} $\bar\delta_f(a)$ in (\ref{defe}).
\begin{cor} Assume the same conditions as in Theorem $\ref{thm1}$. Suppose, in addition,  that $-C\leq K\leq0$ for a non-negative constant $C.$
If $f$ satisfies 
$$\limsup_{r\rightarrow R^{\mathscr S}}\frac{Cr^2\|\mathscr S\|^{-1}_{r,\inf}+\log(\gamma(r)\|\mathscr S\|_{r,\sup})}{T_{f}^{\mathscr S}(r)}=0,$$
then we have the defect relation 
$$\sum_{j=1}^q\bar\delta_f(a_j)\leq 2.$$
\end{cor}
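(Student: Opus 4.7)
The plan is to obtain the defect relation as a direct application of Theorem \ref{nnmm} via the standard \emph{limsup argument} of Nevanlinna theory. Rewriting that theorem (which already incorporates the Gauss curvature bound $-C\leq K\leq 0$) as
$$(q-2)T^{\mathscr S}_f(r)\leq \sum_{j=1}^q\overline N^{\mathscr S}_f(r,a_j)+O\!\left(\log T^{\mathscr S}_f(r)+Cr^2\|\mathscr S\|^{-1}_{r,\inf}+\log(\gamma(r)\|\mathscr S\|_{r,\sup})+\delta\log r\right),$$
valid for $r\in(0,R^{\mathscr S})\setminus E_\delta$, I would divide through by $T_f^{\mathscr S}(r)$. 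Since $f$ is nonconstant one has $T_f^{\mathscr S}(r)\to\infty$ as $r\to R^{\mathscr S}$, so $\log T_f^{\mathscr S}(r)=o(T_f^{\mathscr S}(r))$, and the hypothesis forces the curvature term together with $\log(\gamma(r)\|\mathscr S\|_{r,\sup})$, once divided by $T_f^{\mathscr S}(r)$, to tend to $0$ as $r\to R^{\mathscr S}$. This leaves only the residual term $\delta\log r/T_f^{\mathscr S}(r)$ to be controlled.

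To handle the exceptional set, I would select a sequence $r_n\to R^{\mathscr S}$ with $r_n\notin E_\delta$; such a sequence exists because $\int_0^{R^{\mathscr S}}\gamma(r)dr=\infty$ while $\int_{E_\delta}\gamma(r)dr<\infty$, so the complement $(0,R^{\mathscr S})\setminus E_\delta$ is cofinal at $R^{\mathscr S}$. Taking $\limsup$ along $\{r_n\}$ and using the finite subadditivity $\limsup_n\sum_j a_{j,n}\leq\sum_j\limsup_n a_{j,n}$ together with the definition of $\bar\delta_f(a_j)$ gives
$$q-2 \;\leq\; \sum_{j=1}^q\bigl(1-\bar\delta_f(a_j)\bigr)+\delta L \;=\; q-\sum_{j=1}^q\bar\delta_f(a_j)+\delta L,$$
where $L:=\limsup_{r\to R^{\mathscr S}}\log r/T_f^{\mathscr S}(r)$. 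If $R^{\mathscr S}<\infty$ then $L=0$ trivially; in the parabolic case $R^{\mathscr S}=\infty$, the lower bound $T_f^{\mathscr S}(r)\geq O(\log r)$ for nonconstant $f$ (noted just after (\ref{defe})) shows $L<\infty$. Letting $\delta\downarrow 0$ then yields $\sum_{j=1}^q\bar\delta_f(a_j)\leq 2$.

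The one subtle point is the order of limits: in the case $R^{\mathscr S}=\infty$ one cannot first let $\delta\to 0$ (since $E_\delta$ depends on $\delta$) and must instead take the limit in $r$ first, then send $\delta\to 0$, which is legitimate precisely because $L$ is finite. Apart from this bookkeeping, the argument is a routine consequence of Theorem \ref{nnmm} and the definition of the simple defect, and presents no real obstacle.
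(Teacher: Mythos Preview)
Your argument is correct and is precisely the standard derivation the paper intends: the paper states the corollary immediately after Theorem~\ref{nnmm} with no proof, so your limsup manipulation is exactly what is being left to the reader.

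One small point of justification to tighten: the assertion that $T_f^{\mathscr S}(r)\to\infty$ does not follow from nonconstancy of $f$ alone when $R^{\mathscr S}<\infty$ (consider a bounded holomorphic function on $\mathbb D$). It is, however, a consequence of the \emph{hypothesis} of the corollary: when $R^{\mathscr S}<\infty$ the condition $\int_0^{R^{\mathscr S}}\gamma(r)\,dr=\infty$ forces $\gamma$ to be unbounded near $R^{\mathscr S}$, so the numerator $Cr^2\|\mathscr S\|_{r,\inf}^{-1}+\log(\gamma(r)\|\mathscr S\|_{r,\sup})$ has $\limsup=+\infty$; combined with the assumed vanishing of the ratio's $\limsup$, this forces the nondecreasing $T_f^{\mathscr S}(r)$ to tend to $\infty$. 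With that adjustment your proof goes through verbatim.
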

\section{Targets are compact Riemann surfaces}

Let $\mathcal R$ be a compact  Riemann surface of genus $g.$ Fix  a positive (1,1)-form  $\omega$  on $\mathcal R.$ In a local holomorphic coordinate $\zeta,$ we may write  $\omega$ as
 $$\omega=\frac{\sqrt{-1}}{2\pi}wd\zeta\wedge d\bar\zeta.$$
 According to Chern's theorem \cite{chern}, we have that for every $a\in \mathcal R,$ there exists  a  positive function $u_a$  on $\mathcal R$ such that 
\begin{equation}\label{xx}
2dd^c[\log u_a]=\omega-\delta_a,
\end{equation}
 where $\delta_a$ should be understood as a current. Let $f: \mathcal S\rightarrow \mathcal R$ be a holomorphic mapping. 
 For  $a\in \mathcal R$ with $f\not\equiv a,$      by replacing $\|f, a\|^{-1}$ by $u_a\circ f,$ we  define similarly the Nevanlinna's functions of $f$ as follows
  \begin{eqnarray*}
 T^{\mathscr S}_{f, \omega}(r)&=&\int_{r_0}^r\frac{dt}{t}\int_{D^{\mathscr S}(t)}f^*\omega, \\
 m^{\mathscr S}_{f, \omega}(r,a)&=&\int_{C^{\mathscr S}(r)}\log (u_a\circ f)d\pi_o^r, \\ 
 N^{\mathscr S}_{f}(r,a)&=&\int_{r_0}^r\frac{n^{\mathscr S}_f(t, a)}{t}dt.
  \end{eqnarray*}
Since 
$$n^{\mathscr S}_f(r, a)=\int_{D^{\mathscr S}(r)}f^*\delta_a,$$
then we get 
 $$N^{\mathscr S}_{f}(r,a)=\int_{r_0}^r\frac{dt}{t}\int_{D^{\mathscr S}(t)}f^*\delta_a.$$
Using (\ref{xx}) and Dynkin formula, we obtain 
$${\rm{F. \ M. \ T.}}  \  \ \  \ T^{\mathscr S}_{f,\omega}(r)=m^{\mathscr S}_{f,\omega}(r,a)+N^{\mathscr S}_{f}(r,a)+O(1).$$

In what follows we derive the S. M. T..   A computation gives that 
\begin{equation}\label{ff}
Ric(\omega)=K'\omega,
\end{equation}
where $K'$ is the Gauss curvature of $\omega.$ If $g=0,$ then $\mathcal R$ can be regarded  as $\mathbb P^1(\mathbb C).$ 
Since  
$C_1\omega_{FS}\leq\omega\leq C_2\omega_{FS}$ for two suitable positive constants, then we can confirm the theorem    by using the conclusion  
 proved in Theorem \ref{thm1}.  In the following, we assume that $g\geq1.$
We need to modify  the  form (\ref{form}) as 
\begin{equation*}\label{zzzz}
\Phi=\frac{C\omega}{\prod_{j=1}^q u_{a_j}^{-2}\log^2 u_{a_j}^{2}},
\end{equation*}
where $C$ is  chosen so that $\Phi$ is normalized. Set
  $$f^*\Phi=\xi\frac{\sqrt{-1}}{\pi}hdz\wedge d\bar z.
$$
Since (\ref{xx}) and (\ref{ff}), then   in the sense of currents, (\ref{be}) becomes  
  \begin{eqnarray}\label{xyz}
dd^c[\log\xi] &=& qf^*\omega-f^*(K'\omega)-\sum_{j=1}^q(f-a_j)_0+D_{f, ram} \\
&&+\mathscr R-2\sum_{j=1}^qdd^c\log\log (u_{a_j}^{2}\circ f), \nonumber
  \end{eqnarray} 
It is  similar to (\ref{hhh}), we have
$$\int_{C^{\mathscr S}(r)}\log\log u_{a_j}^{2}d\pi_o^r  
\leq \log T^{\mathscr S}_{f,\omega}(r)+O(1). $$
 Integrating both sides of (\ref{xyz}) and using Dynkin formula,  we get 
  \begin{eqnarray*}
\frac{1}{2}\int_{C^{\mathscr S}(r)}\log\xi d\pi^r_o &\geq&  qT^{\mathscr S}_{f,\omega}(r)-\int_{r_0}^r\frac{dt}{t}\int_{D^{\mathscr S}(t)}f^*(K'\omega)
-\sum_{j=1}^q\overline{N}^{\mathscr S}_f(r, a_j)   \\
&& +T^{\mathscr S}(r, \mathscr R) +O\big(\log T^{\mathscr S}_{f,\omega}(r)\big)+O(1).
  \end{eqnarray*}
 For the second term on the right hand side of the  above identity, we use the change of variable formula and Gauss-Bonnet formula to yield that  
   \begin{eqnarray*}
 \int_{r_0}^r\frac{dt}{t}\int_{D^{\mathscr S}(r)}f^*(K'\omega)&=& \int_{r_0}^r\frac{dt}{t}\int_{\mathcal R}n_f^{\mathscr S}(t, \zeta)K'(\zeta)\omega(\zeta) \\
&=&  \int_{\mathcal R}\bigg[\int_{r_0}^r\frac{n_f^{\mathscr S}(t, \zeta)}{t}dt\bigg]K'(\zeta)\omega(\zeta) \\
&=&  \int_{\mathcal R}N_f^{\mathscr S}(r, \zeta)K'(\zeta)\omega(\zeta)  \\
&\geq&  \int_{\mathcal R}\big(T_{f,\omega}^{\mathscr S}(r)+O(1)\big)K'(\zeta)\omega(\zeta) \\
 &=& (2-2g)T_{f,\omega}^{\mathscr S}(r)+O(1).
  \end{eqnarray*}
Thus, 
 \begin{eqnarray}\label{a1}
\frac{1}{2}\int_{C^{\mathscr S}(r)}\log\xi d\pi^r_o &\geq&  (q-2+2g)T^{\mathscr S}_{f,\omega}(r)-\sum_{j=1}^q\overline{N}^{\mathscr S}_f(r, a_j)   \\
&& +T^{\mathscr S}(r, \mathscr R) +O\big(\log T^{\mathscr S}_{f,\omega}(r)\big)+O(1). \nonumber
  \end{eqnarray}
On the other hand, it follows by using the similar arguments as in  derivation of  (\ref{inq2}) that 
  \begin{eqnarray}\label{a2}
 \int_{C^{\mathscr S}(r)}\log\xi d\pi^r_o 
&\leq&  (1+\delta)^2\log T^{\mathscr S}_{f,\omega}(r)+\log\|\mathscr S\|_{r,\sup}  \\
&&+(2+\delta)\log\gamma(r)+\delta\log r+O(1)  \nonumber
  \end{eqnarray}
  holds for all $r\in(0,R^{\mathscr S})$ outside a set $E_\delta$ with 
$\int_{E_{\delta}}\gamma(r)dr<\infty.$ 
Put together  (\ref{a1}) and (\ref{a2}), then  we conclude the following S. M. T. 
\begin{theorem}\label{thm2} Let $(\mathcal S, h; \mathscr S)$ be a  $\mathscr S$-exhausted Hermitian Riemann surface of $\mathscr S$-radius $R^{\mathscr S}$ with respect to $o,$  and  $\mathcal R$ be a compact  Riemann surface of genus $g.$ 
Fix a positive $(1,1)$-form  $\omega$  on $\mathcal R.$
Let  $\gamma$ be an integrable function on $(0, R^{\mathscr S})$ with $\int_0^{R^{\mathscr S}}\gamma(r) dr=\infty.$
Let $f: \mathcal S\rightarrow\mathcal R$ be a nonconstant holomorphic mapping and  $a_1, \cdots, a_q$ be distinct points in $\mathcal R.$ Then for any $\delta>0$
 \begin{eqnarray*}
&& (q-2+2g)T^{\mathscr S}_{f,\omega}(r)+T^{\mathscr S}(r, \mathscr R) \\
&\leq& \sum_{j=1}^q \overline{N}^{\mathscr S}_f(r,a_j)+O\Big(\log T^{\mathscr S}_{f,\omega}(r)+\log\|\mathscr S\|_{r,\sup}+\log\gamma(r)+\delta\log r\Big)
 \end{eqnarray*}
 holds for all $r\in(0,R^{\mathscr S})$ outside a set $E_\delta$ with 
$\int_{E_{\delta}}\gamma(r)dr<\infty,$ where  
$$\|\mathscr S\|_{r, \sup}=\sup\big\{\|\mathscr S_x\|_h: |\hat x|<r\big\}.$$
\end{theorem}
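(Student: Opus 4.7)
The plan is to imitate the proof of Theorem \ref{thm1}, replacing the Fubini--Study form by the fixed $(1,1)$-form $\omega$ on $\mathcal R$ and replacing the spherical proximity weights $\|\cdot,a_j\|^{-1}$ by the Chern weights $u_{a_j}$ from \eqref{xx}. First, I would dispose of the genus $0$ case: then $\mathcal R\cong\mathbb P^1(\mathbb C)$ and $\omega$ is comparable to $\omega_{FS}$ up to constants $C_1\omega_{FS}\le\omega\le C_2\omega_{FS}$, so the desired inequality follows directly from Theorem \ref{thm1}. Thus assume $g\ge 1$. Define the singular volume form
\[
\Phi=\frac{C\,\omega}{\prod_{j=1}^q u_{a_j}^{-2}\log^2 u_{a_j}^{2}}
\]
with $C$ normalizing $\int_{\mathcal R}\Phi=1$, and write $f^*\Phi=\xi\,\frac{\sqrt{-1}}{\pi}h\,dz\wedge d\bar z$.

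Next, take the Ricci form of the identity $f^*\Phi=\xi\cdot(\text{K\"ahler form})$. Using \eqref{xx} to produce the divisors $(f-a_j)_0$ and using $Ric(\omega)=K'\omega$ from \eqref{ff} gives the current identity \eqref{xyz}. Applying the operator $\int_{r_0}^r \tfrac{dt}{t}\int_{D^{\mathscr S}(t)}\cdot$ and invoking Dynkin's formula (and the concavity trick for the $\log\log u_{a_j}^2$ terms exactly as in \eqref{hhh}) yields a lower estimate for $\tfrac12\int_{C^{\mathscr S}(r)}\log\xi\,d\pi^r_o$. The one nontrivial ingredient here is the curvature integral
\[
\int_{r_0}^r\frac{dt}{t}\int_{D^{\mathscr S}(t)}f^*(K'\omega),
\]
which by the change-of-variable formula equals $\int_{\mathcal R}N^{\mathscr S}_f(r,\zeta)K'(\zeta)\omega(\zeta)$. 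Applying the First Main Theorem $N^{\mathscr S}_f(r,\zeta)=T^{\mathscr S}_{f,\omega}(r)+O(1)$ pointwise in $\zeta$ (the $O(1)$ is uniform since $\mathcal R$ is compact) and then Gauss--Bonnet $\int_{\mathcal R}K'\omega=2-2g$ produces the extra term $(2-2g)T^{\mathscr S}_{f,\omega}(r)$. This is the step that inserts the genus into the final bound, and I expect it to be the conceptually delicate part of the argument, since one must justify integrating the $O(1)$ against the (signed) curvature density uniformly in $\zeta$.

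For the upper bound on $\int_{C^{\mathscr S}(r)}\log\xi\,d\pi^r_o$, I would argue exactly as for \eqref{inq2}: by Jensen's inequality (concavity of $\log$) and Lemma \ref{CL} applied to $k=\xi$,
\[
\int_{C^{\mathscr S}(r)}\log\xi\,d\pi^r_o\le(1+\delta)^2\log\!\int_{r_0}^r\frac{dt}{t}\int_{D^{\mathscr S}(t)}f^*\Phi+\log\|\mathscr S\|_{r,\sup}+(2+\delta)\log\gamma(r)+\delta\log r+O(1),
\]
outside an exceptional set with $\int_{E_\delta}\gamma(r)dr<\infty$. Then $\int_{D^{\mathscr S}(t)}f^*\Phi=\int_{\mathcal R}n^{\mathscr S}_f(t,\zeta)\Phi(\zeta)$ by change of variables, Fubini gives $\int_{r_0}^r\frac{dt}{t}\int_{D^{\mathscr S}(t)}f^*\Phi=\int_{\mathcal R}N^{\mathscr S}_f(r,\zeta)\Phi(\zeta)$, and the First Main Theorem together with $\int_{\mathcal R}\Phi=1$ bounds this by $c_0\,T^{\mathscr S}_{f,\omega}(r)$ for some constant $c_0$. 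Finally, combining the lower and upper estimates for $\int_{C^{\mathscr S}(r)}\log\xi\,d\pi_o^r$, moving $T^{\mathscr S}(r,\mathscr R)$ and the genus contribution to the left, and absorbing the $\log T^{\mathscr S}_{f,\omega}(r)$ terms on the right, one obtains precisely the claimed inequality valid for $r\in(0,R^{\mathscr S})\setminus E_\delta$.
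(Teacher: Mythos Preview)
Your proposal is correct and follows the paper's own argument essentially step by step: the genus-$0$ reduction to Theorem~\ref{thm1}, the singular form $\Phi$ built from the $u_{a_j}$, the Ricci computation leading to \eqref{xyz}, the Dynkin/concavity lower bound, the change-of-variables plus Gauss--Bonnet treatment of $\int f^*(K'\omega)$, and the Jensen/Lemma~\ref{CL}/Fubini upper bound are exactly the ingredients the paper uses, in the same order. The one place you flag as delicate---passing from $\int_{\mathcal R}N^{\mathscr S}_f(r,\zeta)K'\omega$ to $(2-2g)T^{\mathscr S}_{f,\omega}(r)+O(1)$---is handled in the paper with the same brevity (note that strictly speaking the First Main Theorem gives $N^{\mathscr S}_f(r,\zeta)\le T^{\mathscr S}_{f,\omega}(r)+O(1)$, not equality, so the inequality direction relies on $K'\le 0$ for $g\ge 1$; the paper writes it the same way).
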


Similarly as Theorem \ref{nnmm},  we have
\begin{theorem}\label{who} Assume the same conditions as in Theorem $\ref{thm2}$. Suppose, in addition,  that $-C\leq K\leq0$ for a non-negative constant $C.$ Then for any $\delta>0$
 \begin{eqnarray*}
 (q-2+2g)T^{\mathscr S}_{f,\omega}(r) &\leq& \sum_{j=1}^q \overline{N}^{\mathscr S}_f(r,a_j)  +O\Big(\log T^{\mathscr S}_{f,\omega}(r)+Cr^2\|\mathscr S\|^{-1}_{r,\inf}
 \\
 && +\log\|\mathscr S\|_{r,\sup}
+\log\gamma(r)+\delta\log r\Big)
 \end{eqnarray*}
 holds for all $r\in(0,R^{\mathscr S})$ outside a set $E_\delta$ with 
$\int_{E_{\delta}}\gamma(r)dr<\infty,$ where  
 \begin{eqnarray*}
\|\mathscr S\|_{r, \inf}&=&\inf\big\{\|\mathscr S_x\|_h: |\hat x|<r\big\}, \\
\|\mathscr S\|_{r, \sup}&=&\sup\big\{\|\mathscr S_x\|_h: |\hat x|<r\big\}.
 \end{eqnarray*}
\end{theorem}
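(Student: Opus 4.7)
The plan is to imitate the derivation of Theorem \ref{nnmm} from Theorem \ref{thm1}, but starting from Theorem \ref{thm2} in place of Theorem \ref{thm1}. Since Theorem \ref{thm2} already reads
\begin{eqnarray*}
(q-2+2g)T^{\mathscr S}_{f,\omega}(r)+T^{\mathscr S}(r, \mathscr R)
&\leq& \sum_{j=1}^q \overline{N}^{\mathscr S}_f(r,a_j)\\
&&+O\Big(\log T^{\mathscr S}_{f,\omega}(r)+\log\|\mathscr S\|_{r,\sup}+\log\gamma(r)+\delta\log r\Big)
\end{eqnarray*}
outside a set $E_\delta$ with $\int_{E_\delta}\gamma(r)\,dr<\infty$, it suffices to replace $T^{\mathscr S}(r,\mathscr R)$ on the left by a sharp-enough lower bound under the assumption $-C\leq K\leq 0$, and to absorb the discrepancy into the error term on the right.

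First I would revisit the integral expression for $T^{\mathscr S}(r,\mathscr R)$ coming from formula~(\ref{ric}), namely
$$T^{\mathscr S}(r, \mathscr R)=\frac{1}{2}\int_{D^{\mathscr S}(r)} g_r(o,x)K(x)\,dV(x)-\frac{1}{2}\int_{D^{\mathscr S}(r_0)} g_{r_0}(o,x)K(x)\,dV(x).$$
The bounded second term is absorbed into $O(1)$. For the first term, I use $K\geq -C$, so the integrand is $\geq -Cg_r(o,x)/2$ times the curvature bound, then apply Lemma \ref{le1} to replace $g_r(o,x)$ by $\pi^{-1}\log(r/|\hat x|)$ and Lemma \ref{le2} to pass from $dV$ to the line element $d\sigma_t$ via the coarea slicing $\int_0^r dt\int_{C^{\mathscr S}(t)}(\cdot)\,d\sigma_t$. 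Using $d\sigma_t = (2\pi t/\|\mathscr S\|_h)\,d\pi_o^t$ and bounding $\|\mathscr S\|_h\geq\|\mathscr S\|_{r,\inf}$ on $D^{\mathscr S}(r)$ yields exactly the chain of inequalities displayed just before Theorem \ref{nnmm}, which collapses to the elementary computation
$$T^{\mathscr S}(r,\mathscr R)\;\geq\;-\frac{C}{\|\mathscr S\|_{r,\inf}}\int_0^r t\log\frac{r}{t}\,dt+O(1)\;=\;-\frac{Cr^2}{4\|\mathscr S\|_{r,\inf}}+O(1).$$

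Finally, I would substitute this lower estimate into the inequality of Theorem \ref{thm2}: moving $T^{\mathscr S}(r,\mathscr R)$ across the inequality only produces $Cr^2\|\mathscr S\|_{r,\inf}^{-1}$ on the right, which fits inside the $O(\cdot)$ error term as stated. The exceptional set $E_\delta$ is inherited without change from Theorem \ref{thm2}. There is no genuine obstacle here: the only mildly subtle point is the two-sided book-keeping on the Green function representation, which reduces to the same one-variable integral $\int_0^r t\log(r/t)\,dt=r^2/4$ already used in the meromorphic-function case. Thus the proof is a one-line reduction to Theorem \ref{thm2} plus the curvature estimate established between Theorems \ref{thm1} and \ref{nnmm}, applied verbatim in the compact-target setting.
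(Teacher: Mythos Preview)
Your proposal is correct and follows exactly the approach the paper intends: the paper's own ``proof'' of Theorem~\ref{who} is the single line ``Similarly as Theorem~\ref{nnmm}, we have,'' i.e.\ combine Theorem~\ref{thm2} with the lower bound $T^{\mathscr S}(r,\mathscr R)\ge -\tfrac{Cr^2}{4\|\mathscr S\|_{r,\inf}}$ derived just before Theorem~\ref{nnmm}. One cosmetic note: in the paper the $r_0$-integral in (\ref{ric}) is dropped not as a bounded $O(1)$ term but because $K\le 0$ makes $-\tfrac{1}{2}\int_{D^{\mathscr S}(r_0)}g_{r_0}K\,dV\ge 0$; your $O(1)$ argument is equally valid.
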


Theorem \ref{who}  derives  a defect relation,  i.e., Theorem III in Introduction.

\noindent\textbf{Acknowledgement.} The authors are  grateful to   
the referee for his valuable comments on this paper. 
\vskip\baselineskip

\vskip\baselineskip

\label{lastpage-01}
\end{document}